\renewcommand{\@biblabel}[1]{[#1]\hfill}
\newcommand{\IEEEPARstart}[1]{\textbf{\Large #1}}
\DeclareMathAlphabet{\mathcal}{OMS}{cmsy}{m}{n}
\tikzstyle{block} = [draw, thick, node distance=0.5cm, minimum width=1cm, inner sep=6pt]
\tikzstyle{sum} = [draw, thick, circle, node distance=1cm, inner sep=3.5pt, path picture={\node at (path picture bounding box.center) [draw, anchor = center] {$+$};}]
\newtheorem{assumption}{Assumption}
\newtheorem{definition}{Definition}
\newtheorem{theorem}{Theorem}
\newtheorem{proposition}{Proposition}
\newtheorem{lemma}{Lemma}
\newtheorem{remark}{Remark}
\newcommand{\titlename}{Model predictive control for linear uncertain systems using integral quadratic constraints}
\newcommand{\x}{x}
\renewcommand{\u}{u}
\newcommand{\w}{w}
\newcommand{\y}{y}
\renewcommand{\d}{d}
\newcommand{\constrMat}{H}
\newcommand{\constrVec}{h}
\newcommand{\constrTight}{g}
\newcommand{\dset}{\mathbb{D}}
\newcommand{\dmax}{d_\mathrm{max}}
\newcommand{\A}{A}
\newcommand{\Ae}{A_\G}
\newcommand{\Ce}{C_\G}
\newcommand{\G}{G}
\newcommand{\Bu}{B^\u_\G}
\newcommand{\Bd}{B^\d_\G}
\newcommand{\Bw}{B^\w_\G}
\newcommand{\C}{C}
\newcommand{\Du}{D^\u_\G}
\newcommand{\Dw}{D^\w_\G}
\newcommand{\Dd}{D^\d_\G}
\newcommand{\AG}{A_\G}
\newcommand{\BGd}{B_\G^\d}
\newcommand{\BGw}{B_\G^\w}
\newcommand{\CG}{C_\G}
\newcommand{\DGw}{D_\G^\w}
\newcommand{\DGd}{D_\G^\d}
\newcommand{\nomx}{\xi}
\newcommand{\nomu}{v}
\newcommand{\e}{e}
\newcommand{\K}{K}
\newcommand{\Kloc}{K_{\Omega}}
\newcommand{\Qproof}{\tilde{\Q}}
\renewcommand{\t}{t}
\renewcommand{\k}{k}
\newcommand{\nomy}{r}
\newcommand{\filt}{\Psi}
\newcommand{\filtx}{\psi}
\newcommand{\filtA}{A_\Psi}
\newcommand{\filtBin}{B_{\Psi}^\y}
\newcommand{\filtBout}{B_{\Psi}^\w}
\newcommand{\filtC}{C_\Psi}
\newcommand{\filtDin}{D_{\Psi}^\y}
\newcommand{\filtDout}{D_{\Psi}^\w}
\newcommand{\filty}{p}
\newcommand{\M}{M}
\newcommand{\symLMI}{\star}
\newcommand{\allA}{\mathcal{A}}
\newcommand{\allB}{\mathcal{B}}
\newcommand{\allC}{\mathcal{C}}
\newcommand{\allD}{\mathcal{D}}
\renewcommand{\P}{P}
\newcommand{\Ptube}{\mathcal{P}}
\newcommand{\Q}{Q}
\newcommand{\J}{J}
\newcommand{\Rcost}{R}
\renewcommand{\S}{S}
\newcommand{\T}{T}
\newcommand{\rpred}[2]{\bar{\nomy}_{#1|#2}}
\newcommand{\nomxpred}[2]{\bar{\nomx}_{#1|#2}}
\newcommand{\nomupred}[2]{\bar{\nomu}_{#1|#2}}
\newcommand{\nomxsol}[3][]{{{\nomx}_{#2|#3}^{\star#1}}}
\newcommand{\nomusol}[3][]{{{\nomu}_{#2|#3}^{\star#1}}}
\newcommand{\csol}[3][]{{{\s}_{#2|#3}^{\star#1}}}
\renewcommand{\c}{c}
\newcommand{\s}{s}
\newcommand{\cpred}[2]{\bar{\s}_{#1|#2}}
\newcommand{\dimx}{{n_\x}}
\newcommand{\dimu}{{n_\u}}
\newcommand{\dimw}{{n_\w}}
\newcommand{\dimd}{{n_\d}}
\newcommand{\dimy}{{n_\y}}
\newcommand{\dimfilty}{{n_\filty}}
\newcommand{\dimconstrVec}{{n_\constrVec}}
\newcommand{\ltwo}[1]{\ell_{2}^{#1}}
\newcommand{\ltwoe}[1]{\ell_{2e}^{#1}}
\newcommand{\ltworho}[1]{\ell_{2,\rho}^{#1}}
\newcommand{\dint}{\,\mathrm{d}}
\DeclareMathOperator{\diag}{diag}
\newcommand{\R}{\mathbb{R}}
\newcommand{\RHinf}{\mathbb{RH}_\infty}
\newcommand{\RLinf}{\mathbb{RL}_\infty}
\newcommand{\norm}[1]{\left\|#1\right\|}
\newcommand{\constd}{a_\d}
\newcommand{\constproof}{a}
\newcommand{\ssrep}[4]{\left[\begin{array}{c|c} #1 & #2 \\ \hline #3 & #4\end{array}\right]}
\newcommand{\refeq}[2]{\overset{\makebox[0pt][c]{\scriptsize #1}}{#2}}
\begin{document}\normalem
\title{\titlename}
\author{Lukas Schwenkel$^\text{1}$, Johannes Köhler$^\text{1,2}$, Matthias A. Müller$^\text{3}$, and Frank Allgöwer$^\text{1}$
\thanks{ 
    F. Allgöwer and M. A. Müller are thankful that this work was funded by Deutsche Forschungsgemeinschaft (DFG, German Research Foundation) – AL 316/12-2 and MU 3929/1-2 - 279734922. F. Allgöwer is thankful that this work was funded by Deutsche Forschungsgemeinschaft (DFG, German Research Foundation) – GRK 2198/1 - 277536708. L. Schwenkel thanks the International Max Planck Research School for Intelligent Systems (IMPRS-IS) for supporting him.
    
    $^1$ L. Schwenkel, J. Köhler, and F. Allgöwer are with the Institute for Systems Theory and Automatic Control, University of Stuttgart, Stuttgart 70550, Germany (e-mail: $\{$schwenkel, allgower$\}$@ist.uni-stuttgart.de).
    
    $^2$ J. Köhler is with the Institute for Dynamical Systems and Control, ETH Zurich, ZH-8092, Switzerland (e-mail: jkoehle@ethz.ch).
    
    $^3$ M. A. Müller is with the Institute of Automatic Control, Leibniz University Hannover, 30167 Hannover, Germany (email: mueller@irt.uni-hannover.de).}}

\date{\vspace{-1cm}}

\maketitle
\thispagestyle{firststyle}

\begin{abstract}
\textbf{Abstract: }In this work, we propose a tube-based MPC scheme for state- and input-constrained linear systems subject to dynamic uncertainties characterized by dynamic integral quadratic constraints (IQCs).
In particular, we extend the framework of $\rho$-hard IQCs for exponential stability analysis to external inputs.
This result yields that the error between the true uncertain system and the nominal prediction model is bounded by an exponentially stable scalar system.
In the proposed tube-based MPC scheme, the state of this error bounding system is predicted along with the nominal model and used as a scaling parameter for the tube size.
We prove that this method achieves robust constraint satisfaction and input-to-state stability despite dynamic uncertainties and additive bounded disturbances.
A numerical example demonstrates the reduced conservatism of this IQC approach compared to state-of-the-art robust MPC approaches for dynamic uncertainties.

\end{abstract}

\section{Introduction}

\begin{figure}[t]
  \centering
  \begin{tikzpicture}
    \node [block] (Delta) {\Large $\Delta$};
    \node [block, below=of Delta, minimum height=0.8cm] (G) {\Large $\G$};
    \node[sum, left=of Delta] (sum) {};
    \draw[-latex, thick] (G.east) +(0.8,-0.2) node [right] {$\d$} -- ($(G.east)+(0,-0.2)$);
    \draw[-latex, thick] (Delta.east) -| node[left, pos=0.75] {$\w$} ($(G.east)+(1,0.2)$) -- ($(G.east)+(0,0.2)$);
    \draw[-latex, thick] (sum) -- node[above] {$\y$} (Delta);
    \draw[latex-, thick] (sum) |- (G);
    \draw[-latex, thick] (sum) --  (Delta);
    \draw[-latex, thick] (sum) +(-0.8,0) node [left] {$\nomy$} -- (sum);
  \end{tikzpicture}
  \caption{Feedback interconnection of a linear system $\G$ and an uncertainty $\Delta$ with external inputs $\d$ and $\nomy$, which corresponds to the error dynamics~\eqref{eq:error_interconnection} in the proposed MPC scheme that is defined in Sec.~\ref{sec:setup}.}\label{fig:feedback_interconnection}
\end{figure}
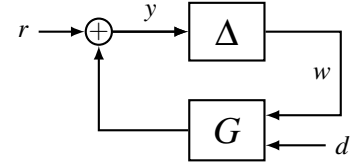

\IEEEPARstart{W}HEN facing a control problem with hard state or input constraints, a popular approach that can guarantee stability and constraint satisfaction is to design a model predictive controller (MPC) (see e.g.~\cite{Rawlings2017} or~\cite{Mayne2014}).
Throughout the past decades, the question how to adjust an MPC scheme to maintain these guarantees in the presence of disturbances or model uncertainties has been studied frequently for different kinds of uncertainties~\cite{Bemporad1999}. 
This led to several robust MPC schemes reaching from bounded disturbances (e.g.~\cite{Chisci2001}) over stochastic disturbances (e.g.~\cite{Mesbah2016}), state and input dependent disturbances (e.g.~\cite{Koehler2021}), and parametric uncertainties (e.g.~\cite{Kouvaritakis2016}) to dynamic uncertainties (e.g.~\cite{Falugi2014}).
The reason for this wealth of approaches is not only the different nature of various uncertainties but also that there is a trade-off between conservatism and complexity of the underlying uncertainty descriptions.
While some control tasks require a fast and simple MPC scheme, there are other scenarios where a larger online computational complexity can be tolerated to gain tighter uncertainty descriptions and less cautious controllers, which can lead to significant performance improvements and much larger operating ranges.
Interestingly, there is a lot of MPC literature on the rather simple case of additive bounded disturbances, whereas, on the other end of the scale, the handling of unmodeled dynamics, delays, errors from using reduced order models, or other dynamic uncertainties in MPC remains an open research area~\cite{Mayne2014}.
This is in contrast to classical robust control literature (e.g.,~\cite{Zhou1995}) where stability and performance of feedback interconnections of a known linear system $\G$ and a dynamic uncertainty $\Delta$ as shown in Fig.~\ref{fig:feedback_interconnection} are studied comprehensively.
We make use of analysis tools from the robust control literature for such interconnections and base our proposed MPC scheme on the powerful and efficient framework of integral quadratic constraints (IQCs, see~\cite{Megretski1997} for the original paper, or~\cite{Veenman2016} for a tutorial overview).
When the input-output behavior of an uncertainty is described by an IQC, stability and performance of the feedback interconnection can be verified with linear matrix inequalities (LMIs).
In this article, we bridge this gap between classical robust control methods and robust MPC by providing an MPC design method for linear constrained systems subject to dynamic uncertainties characterized using $\rho$-hard IQCs as defined in~\cite{Lessard2016}.
Furthermore, the use of IQCs in robust MPC is a general and unifying approach since a multitude of uncertainties can be described with IQCs such as $\ell_2$-gain bounds, uncertain time-delays, polytopic parameter uncertainties, or sector- and slope-restricted nonlinearities.

\paragraph*{Related work}
A widespread approach to robustify MPC schemes is tube-based MPC, where a nominal MPC scheme is implemented with tighter constraints and the amount of the constraint tightening is determined from the size of a tube confining all possible trajectories of the true system.
The main advantage of tube-based MPC compared to other robust MPC approaches like min-max MPC or multi-stage MPC is that the online computational complexity of tube-based MPC schemes is, if at all, only slightly larger than a nominal MPC. 
Tube-based MPC was first introduced by~\cite{Chisci2001} and~\cite{Mayne2001} for linear systems subject to additive bounded disturbances and later improved in~\cite{Mayne2005}.
Instead of using a static tube,~\cite{Rakovic2012} proposed to scale the tube size with a parameter that is optimized online, thereby offering more flexibility.
This idea is also used in~\cite{Fleming2015} to develop an MPC scheme for systems subject to parametric uncertainties, which is in~\cite{Kouvaritakis2016} extended to a mix of parametric uncertainties and bounded additive disturbances.
Recently, in~\cite{Subramanian2021} the tube-based approach to parametric uncertainties is combined with a less conservative multi-stage MPC allowing the user to trade off between complexity and conservatism of the MPC scheme.
In reality, however, uncertainties might not be parametric but are often more complex and dynamic.
In~\cite{Lovaas2008} and ~\cite{Falugi2014}, dynamic uncertainties are captured with a finite $\ell_\infty$-gain and conservatively overapproximated using a constant additive bound in order to use the MPC schemes designed for additive bounded disturbance.
In~\cite{Loehning2014}, a dynamic bound in form of a stable scalar system is used to ensure robust constraint satisfaction and stability when applying MPC with a reduced order model, despite the dynamic uncertainty arising from the model order reduction.
Similarly, in \cite{Thangavel2018} and \cite{Thangavel2019} such an error bounding system is used to describe the dynamic uncertainty and a multi-stage MPC is employed.
However, no guarantees regarding robust constraint satisfaction or stability are provided and the computational demand increases exponentially compared to a nominal MPC. 
Instead of designing a new MPC scheme, existing MPC schemes have been tested in~\cite{Heath2006},~\cite{Petsagkourakis2020a}, and~\cite{Petsagkourakis2020b} for robust stability against dynamic uncertainties satisfying an IQC, however, without guarantees for robust state constraint satisfaction.
Summing up, there is a need for a robust MPC scheme that can guarantee stability and constraint satisfaction for a general class of dynamic uncertainties.
IQCs offer this generality and can describe a wide variety of uncertainty classes.
In this article, we design a robust MPC scheme for systems subject to dynamic uncertainties that are bounded by IQCs and to the best knowledge of the authors, there exist no such MPC schemes ensuring robust stability and constraint satisfaction.

\paragraph*{Contribution and Outline}
We propose a tube-based MPC scheme for state and input constrained linear systems subject to dynamic uncertainties that are described by $\rho$-hard IQCs.
In Sec.~\ref{sec:setup}, we start by describing the problem setup, providing a brief introduction into tube-based MPC and $\rho$-hard IQCs, as well as connecting the time-domain $\rho$-hard IQCs to frequency domain $\rho$-IQCs via Positive-Negative multipliers.
In Sec.~\ref{sec:error_sys}, we extend the framework of $\rho$-hard IQCs to interconnections with external inputs by using a scalar exponentially stable system to bound the state of the extended system.
In our tube-based MPC setup, we show that this scalar system provides an upper bound on the error between the true uncertain system and a nominal prediction model.
In Sec.~\ref{sec:mpc}, we develop a tube-based MPC scheme that predicts the state of this dynamic error bound along with the nominal model and utilizes it as a scaling parameter for the tube size.
This results in a tube dynamics which adjusts its size online according to the excitation of the dynamic uncertainty.
As our key contribution, we prove that the proposed MPC scheme guarantees input-to-state stability (ISS) against bounded external disturbances as well as robust constraint satisfaction despite the dynamic uncertainty in the feedback loop.
Further, in Sec.~\ref{sec:example}, we demonstrate the flexibility and the reduced conservatism of the IQC approach in a numerical example and discuss some implementation aspects.

Preliminary results regarding the incorporation of IQCs in MPC can be found in the conference proceedings \cite{Schwenkel2020}.
Compared to~\cite{Schwenkel2020}, the present article provides a more comprehensive analysis including connections to frequency domain IQCs, a more elaborate example, and a less conservative controller resulting from an improved scheme and a better proof technique.
In particular, the initial MPC design in~\cite{Schwenkel2020} considers a fixed nominal system, and hence the set of nominally feasible control actions is independent of the measured state, thus reducing to a robust trajectory planning with a linear stabilizing feedback.
As one of the main technical contributions of the present work, we extend the tube dynamics to allow for an optimization of the initial state of the nominal system, thus, significantly increasing the flexibility of the proposed approach.

\paragraph*{Notation} We denote the unit circle in the complex plane by $\mathbb{T} = \{z\in\mathbb{C}|\,|z|=1\}$, the set of real rational and proper transfer matrices of dimension $n\times m$ with $\RLinf^{n\times m}$ and its subset of functions analytic outside the closed unit disk with $\RHinf^{m\times n}$. 
Whenever the dimensions are obvious from the context, we write $\RLinf$ and $\RHinf$.
The set of sequences in $\R^n$ is denoted by $\ltwoe{n}=\{(q_k)_{k\in\mathbb{N}}|q_k\in\R^n\}$, the subset of square summable sequences is denoted by $\ltwo{n}=\{q\in\ltwoe{n}| \sum_{k=0}^\infty \norm{q_k}^2 <\infty \}$, and for $\rho\in(0,1)$ the subspace of exponentially square summable sequences is denoted by $\ltworho{n}=\{q\in\ltworho{n}|\sum_{k=0}^\infty \rho^{-2k}\norm{q_k}^2 <\infty \}$.
The $z$-transformation of a sequence $q \in \ltwo{n}$ is denoted by $\hat{q}(z)=\sum_{k=0}^{\infty} q_kz^{-k}$.
For symmetric forms $X^\top PX$ with $P\in\R^{n\times n}$ and $X\in\R^{n \times m}$, we write $[\symLMI]^\top P X$ for convenience. 
For matrices $A,B,C,D$ with suitable dimensions we define $\ssrep{A}{B}{C}{D}=D+C(zI-A)^{-1} B$. 
For $\rho\in(0,1]$ and $\Pi \in \RLinf^{n\times m}$ we define the notation $\Pi_\rho$ as the multiplier $\Pi_\rho :\mathbb C\to \mathbb C^{n\times m}, z \mapsto \Pi(\rho z)$ and further, we denote the para-Hermitian conjugate with $\Pi^\sim (z) = \Pi^\top (z^{-1})$.
For positive definite matrices $\P\succ 0$ we define the norm $\norm{x}^2_\P=x^\top \P x$.
\section{Setup}\label{sec:setup}
We consider the following linear discrete-time system 
\begin{subequations}\label{eq:sys}
    \begin{align}
    \x_{\t+1}&=\A\x_\t+\Bw\w_\t +\Bd\d_\t+\Bu\u_\t \label{eq:sys_x}\\
    \y_\t &= \C\x_\t + \Dw\w_\t+\Dd\d_\t+ \Du\u_\t \label{eq:y}
    \end{align}
\end{subequations}
with state vector $\x_\t\in\R^\dimx$, control input $\u_\t\in\R^\dimu$, external disturbance\footnote{ Note that this setup includes the special case of two different disturbances $\d^x$ on the state and $\d^y$ the output. In this special case often separate bounds $\dmax^x$ and $\dmax^y$ are known and can be considered to reduce the conservatism.} $\d_\t \in \dset=\{\d\in \R^\dimd|\norm{\d}_\Xi \leq \dmax\}$, $\dmax\geq 0$, $\Xi\succ 0$, uncertainty signal $\w_\t \in \R^\dimw$, output $\y_\t \in\R^{\dimy}$, and the real matrices $\A, \Bw, \Bd, \Bu, \C, \Dw, \Dd, \Du$ with suitable dimensions.
The system is interconnected in feedback with a bounded and causal uncertainty $\Delta:\ltwoe{\dimy}\to\ltwoe{\dimw}$ 
\begin{align}\label{eq:w}
\w_\t = \Delta(\y)_\t,
\end{align}
which is dynamic and depends on the past measurements. Hence, the uncertainty may for example contain unmodeled dynamics, model mismatch, or delays.
Note that the output $\y$ does not denote the vector of measured signals but the vector of signals that enter the uncertainty $\Delta$.

\begin{assumption}[Well-posedness]\label{ass:wp}
    The operator $\Delta$ is bounded and causal and the interconnection of~\eqref{eq:sys} and~\eqref{eq:w} is well-posed, i.e., for each $\d\in\ltwoe{\dimd}$ and $\u\in\ltwoe{\dimu}$ there exists a unique response $\y\in\ltwoe{\dimy}$, $\w\in\ltwoe{\dimw}$, $\x\in\ltwoe{\dimx}$.
\end{assumption}

This assumption guarantees that system \eqref{eq:sys} admits a unique solution, i.e., there is no algebraic loop, which trivially holds in the case $\Dw=0$. 
Considering well-posed interconnections of a known system and an unknown system is a classical robust control setup, e.g., similar to \cite{Hu2016a}.

The control objective is ISS from $\d$ to $\x$ while satisfying the polytopic state and input constraints
\begin{align}\label{eq:constr}
\constrMat \begin{bmatrix}
\x_\t \\ \u_\t
\end{bmatrix} \leq \constrVec
\end{align}
for all times $\t\geq 0$.
The rows of $\constrMat\in\R^{\dimconstrVec\times (\dimx+\dimu)}$ and $\constrVec\in\R^\dimconstrVec$ are denoted by $\constrMat_i$ and $\constrVec_i$ for each $i\in 1,\dots,\dimconstrVec$, respectively.
To keep the theoretical derivations concise and clear, we assume that full state measurement is available.
We base our approach to solve this problem on tube-based MPC which is introduced in the following.

\subsection{Tube-Based Model Predictive Control}\label{sec:tube_mpc}
In this subsection we briefly sketch the idea of tube-based MPC, which was introduced almost simultaneously by~\cite{Chisci2001} and~\cite{Mayne2001} for the case of additive bounded disturbances.
MPC in general is based on predicting the state trajectories with a model and as common in MPC (e.g.~\cite{Kouvaritakis2016}), we denote the predictions at time $\t$ that predict $\k$ steps into the future with the index $\k|\t$. 
In the presence of disturbances and model mismatches, however, precise predictions are impossible and thus, in tube-based MPC a set confining all possible uncertain trajectories is predicted -- the so-called tube.
This tube is centered around a nominal trajectory that follows the uncertainty-free model
\begin{subequations}\label{eq:nom}
    \begin{align}\label{eq:nomx}
    \nomx_{\k+1|\t}&=\A\nomx_{\k|\t}+\Bu\nomu_{\k|\t} \\
    \nomy_{\k|\t} &= \C \nomx_{\k|\t} + \Du \nomu_{\k|\t}\label{eq:nomy}
    \end{align}
\end{subequations}
with nominal prediction $\nomx_{\k|\t}$, nominal input $\nomu_{\k|\t}$, and nominal output $\nomy_{\k|\t}$.
The tube contains all possible trajectories $\x_{\k|\t}$ that follow the true system dynamic $\eqref{eq:sys}$ for $\k\geq 0$ 
\begin{subequations}\label{eq:sys_pred}
    \begin{align}
    \x_{\k+1|\t}&=\A\x_{\k|\t}+\Bw\w_{\k|\t} +\Bd\d_{\k+\t}+\Bu\u_{\k|\t} \label{eq:sys_x_pred}\\
    \y_{\k|\t} &= \C\x_{\k|\t} + \Dw\w_{\k|\t}+\Dd\d_{\k+\t}+ \Du\u_{\k|\t} \\
    \w_{\k|\t} &= \Delta(\y_{\cdot|\t})_\k
    \end{align}
\end{subequations}
starting at the current state $\x_{0|\t}=\x_{\t}$ and having the same past $\y_{-\k|\t}=\y_{\t-\k}$ for $\k\in[1,\t]$.
While we assume full state measurement of $\x_\t$, the disturbances $\d_{\k+\t}$, the uncertainty $\Delta$ and thus $\w_{\k|\t}$ are unknown, and thus $\x_{\k|\t}$ for $\k\geq 1$ is unknown as well.
Hence, the possible future state $\x_{\k|\t}$ is neither a prediction (unknown at time $\t$) nor a realization (we might choose other inputs), it is a \textit{what-if} state meaning where would the state $\x_{\t+\k}$ be if from now (time $\t$) on we apply the inputs $\u_{0|\t},\dots,\u_{\k-1|\t}$ and the external disturbances $\d_{\t},\dots,\d_{\t+\k-1}$.

The error between the possible future state $\x_{\k|\t}$ and the nominal prediction $\nomx_{\k|\t}$ is denoted by $\e_{\k|\t}=\x_{\k|\t}-\nomx_{\k|\t}$.
To ensure that the error $\e_{\k|\t}$ does not diverge, the MPC control action $\nomu_{\k|\t}$ is augmented with a feedback of the error
\begin{align} \label{eq:input}
\u_{\k|\t}=\nomu_{\k|\t} + \K\e_{\k|\t}
\end{align}
where the feedback gain $\K$ is static.
From a robust control point of view it might seem unusual and limiting to consider a static feedback $\K$, however, to keep the derivations concise and clear and to be consistent with tube-based MPC literature we use a static $\K$ in this work, although it might be possible to extend the framework to dynamic controllers $\K$.
Hence, $\u_{\k|\t}$ is a \textit{what-if} input that includes knowledge of the possible future error $\e_{\k|\t}$.
Thereby, the feedback $\K\e_{\k|\t}$ regulates $\x_{\k|\t}$ towards the nominal trajectory $\nomx_{\k|\t}$, while the control action $\nomu_{\k|\t}$ steers the nominal trajectory. 
This key feature of tube-based MPC significantly reduces the conservatism as the feedback $\K$ can keep the tube confining all possible trajectories small by stabilizing the error dynamics
\begin{subequations}\label{eq:error_interconnection}
    \begin{align}\label{eq:error_dyn}
    \e_{\k+1|\t} &= \Ae\e_{\k|\t} + \BGw\w_{\k|\t} + \BGd \d_{\k+\t}\\
    \y_{\k|\t}&= \Ce\e_{\k|\t}  + \DGw\w_{\k|\t}+\DGd\d_{\k+\t} + \nomy_{\k|\t}  \label{eq:y_pred} \\
    \w_{\k|\t} &= \Delta(\y_{\cdot|\t})_\k,
    \end{align}
\end{subequations}
where $\Ae=\A+\Bu\K$ and $\Ce=\C+\Du\K$.
The feedback interconnection~\eqref{eq:error_interconnection} of the error dynamics and the uncertainty $\Delta$ is well-posed\footnote{Well-posedness follows from Ass.~\ref{ass:wp} and the fact that \eqref{eq:sys} and \eqref{eq:error_interconnection} have the same feedthrough matrix $\Dw$.} and in the form shown in Fig.~\ref{fig:feedback_interconnection} with
\begin{align}
    \G=\ssrep{\AG}{\BGw\ \ \BGd}{\CG}{\DGw\ \ \DGd}.
\end{align}

For now, we have considered predictions at a fixed time $\t$.
In closed loop, the MPC controller solves an open-loop finite-horizon optimal control problem to decide on the new nominal initial condition $\nomx_{0|\t}$ and a new nominal input sequence $\nomu_{\cdot |t}$.
When determining  $\nomx_{0|\t}$ and $\nomu_{\cdot |t}$ it must be ensured that the constraints \eqref{eq:constr} are not only satisfied for  $\nomx_{\cdot|\t}$ and $\nomu_{\cdot |t}$, but for the whole tube around this nominal trajectory to ensure robust constraint satisfaction, i.e., these constraints must be tightened according to the size of the tube.
Then, the first input of the control sequence is applied to the system, i.e., $\u_\t = \u_{0|\t}=\nomu_{0|\t}+\K(\x_\t-\nomx_{0|\t})$, which recursively renders $\w_\t = \w_{0|\t}$ and $\x_{\t+1} = \x_{1|\t}$ since $\Delta$ is causal.
Fixing the initial state $\nomx_{0|\t} = \nomx_{1|\t-1}$ to follow the nominal dynamics~\eqref{eq:nomx} as proposed in the early work~\cite{Mayne2001} simplifies the analysis significantly.
This case has been considered in the preliminary conference paper~\cite{Schwenkel2020}.
Nevertheless, at each time $\t$, we obtain a new measurement $\x_\t$ and we want to make use of this new information when initializing the nominal trajectory $\nomx_{0|\t}$.
In~\cite{Chisci2001} it was suggested to initialize $\nomx_{0|\t} = \x_\t$, but it is not guaranteed that this choice is actually better.
Thus,~\cite{Mayne2005} proposed to treat $\nomx_{0|\t}$ as free decision variable and to optimize over all $\nomx_{0|\t}$ that contain the current measurement in the tube centered around them.
In the present work, we want to use this additional degree of freedom since it leads to faster convergence as discussed in~\cite{Rawlings2017} and larger regions of attraction as we will see in our numerical example.
This, however, implies that we need to specify how the error evolves in closed loop, i.e., if the time $\t$ increments and a new nominal initial state $\nomx_{0|\t}$ is chosen
\begin{align}\label{eq:error_dyn_cl}
\e_{0|\t+1} = \e_{1|\t} - \nomx_{0|\t+1} + \nomx_{1|\t}.
\end{align} 
Although tube-based MPC schemes have different definitions of the tube, they are always based on a bound on the error $\e_{\k|\t}$.
A key step in this article is to derive such an error bound based on the assumption that the input-output-behavior of $\Delta$ can be described with a $\rho$-hard IQCs and that the disturbance $\d_\t \in \dset$ is bounded.
Therefore, in the remainder of this section, we give a short introduction into $\rho$-hard IQCs, which provide a general framework to analyze interconnections of the form in Fig.~\ref{fig:feedback_interconnection} for dynamic and static uncertainties $\Delta$.

\subsection{$\rho$-hard Integral Quadratic Constraints}
IQCs originate from the seminal work~\cite{Megretski1997} and are a powerful tool to analyze feedback interconnections as in Fig.~\ref{fig:feedback_interconnection} of a known linear system $G$ and an unknown, possibly nonlinear, operator $\Delta$.
Originally, the framework was developed from a continuous-time frequency-domain point of view, however, the IQC framework has been extended to time-domain formulations via dissipation inequalities~\cite{Seiler2015} and to discrete-time systems~\cite{Hu2016a}.
We build our analysis on the framework of $\rho$-hard IQCs, which were developed in~\cite{Lessard2016}, to analyze exponential stability of discrete-time systems with time-domain IQCs.
This enables us to construct a bound on $\e_{\k|\t}$ in form of an exponentially stable error bounding system.
Let us start by defining a time-domain $\rho$-hard IQC.

\begin{definition}[{$\rho$-hard IQC,~\cite[Definition 3]{Lessard2016}}]
    Let $\rho\in(0,1]$, $M\in\R^{\dimfilty\times \dimfilty}$ and $\filt\in\RHinf^{\dimfilty\times (\dimy+\dimw)}$. A bounded operator $\Delta:\ltwoe{\dimy}\to\ltwoe{\dimw}$ is said to satisfy the $\rho$\emph{-hard IQC} defined by $(\filt,\M)$ if for all $\y\in\ltwoe{\dimy}$ and for all $\T\geq 1$ the following inequality holds
    \begin{align}\label{eq:hardIQC}
    \sum_{\t=0}^{\T-1} \rho^{-2\t}\filty_\t^\top \M\filty_\t\geq 0,\ \text{where } \filty=\filt \begin{bmatrix}
    \y\\\Delta(\y)
    \end{bmatrix}.
    \end{align}
\end{definition}

The key idea when analyzing interconnections as in Fig.~\ref{fig:feedback_interconnection} where $\Delta$ satisfies the $\rho$-hard IQC defined by $(\filt,\M)$ is to replace the uncertain component $\Delta$ with the filter $\filt$ and to consider $\w$ as an input that obeys the output constraint~\eqref{eq:hardIQC}.
This is sketched in Fig.~\ref{fig:augmented_dyn_standard}.
With a state space realization of the filter
\begin{align}\label{eq:filt}
\filt=\left[\begin{array}{c|cc}
\filtA & \filtBin & \filtBout \\ \hline \filtC & \filtDin & \filtDout
\end{array}\right]
\end{align} 
we can write the transfer function $\w\to\filty$ ($\d=0$, $\nomy=0$) as
\begin{align}\label{eq:aug_dyn}
\left[\begin{array}{c|c}
\allA & \allB \\ \hline \allC & \allD
\end{array}\right] = \left[\begin{array}{cc|c}
\AG & 0 & \BGw  \\
\filtBin \CG & \filtA & \filtBin \DGw + \filtBout \\ \hline \filtDin \CG & \filtC & \filtDin \DGw + \filtDout
\end{array}\right].
\end{align}
We will denote the state vector of $\filt$ at time $\t$ with $\filtx_\t$.
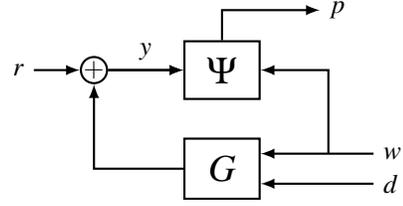
\begin{figure}
  \centering
  \begin{tikzpicture}
    \node [block] (Delta) {\Large $\filt$};
    \node [block, below=of Delta, minimum height=0.8cm] (G) {\Large $\G$};
    \node[sum, left=of Delta] (sum) {};
    \draw[-latex, thick] (G.east) +(1.5,-0.2) node [right] {$\d$} -- ($(G.east)+(0,-0.2)$);
    \draw[-latex, thick] (G.east) +(1.5,0.2) node [right] {$\w$} -- ($(G.east)+(0,0.2)$);
    \draw[latex-, thick] (Delta.east) -| ($(G.east)+(0.9,0.2)$);
    \draw[-latex, thick] (sum) -- node[above] {$\y$} (Delta);
    \draw[latex-, thick] (sum) |- (G);
    \draw[-latex, thick] (sum) --  (Delta);
    \draw[-latex,thick] (Delta.north) |- +(1.3,0.4) node[right] {$\filty$};    
    \draw[-latex, thick] (sum) +(-0.8,0) node [left] {$\nomy$} -- (sum);
  \end{tikzpicture}
  \caption{The IQC characterization allows us to replace $\Delta$ with the filter $\Psi$ and to consider the uncertainty output $\w$ as external input that satisfies the constraint~\eqref{eq:hardIQC} on $\filty$.}\label{fig:augmented_dyn_standard}
\end{figure}

The following assumption is made on the uncertainty.
\begin{assumption}[Uncertainty]\label{ass:iqc}
    The operator $\Delta$ satisfies the $\rho$-hard IQC defined by $(\filt,\M)$ and $\rho\in(0,1]$. 
    The filter $\Psi$ is initialized with $\psi_0=0$.
    There exists $\P\succ 0$ such that the following matrix inequality holds
    \begin{align}\label{eq:lmi}
    \begin{bmatrix}
    I & 0\\
    \allA & \allB \\ \allC & \allD 
    \end{bmatrix}^\top \begin{bmatrix}
    -\rho^2\P & 0 & 0 \\
    0 & \P & 0 \\
    0 & 0 & \M
    \end{bmatrix}\begin{bmatrix}
    I & 0\\
    \allA & \allB \\ \allC & \allD 
    \end{bmatrix}\prec 0.
    \end{align}
\end{assumption}

Note that in the tube-based MPC setting from Sec.~\ref{sec:tube_mpc}, the matrices $\AG$ and $\CG$ and thus also $\allA$ and $\allC$ depend on the feedback gain $\K$. 
Throughout this work, we assume that a suitable $\K$ satisfying Ass.~\ref{ass:iqc} is given. 
For a fixed gain $\K$ and a fixed constant $\rho$, \eqref{eq:lmi} reduces to a linear matrix inequality (LMI), which can thus be embedded in a suitable offline computation of the matrix $\P$ (cf. Rem.~\ref{rem:optimize_P} and~\ref{rem:pseudoAlgorithm}).
Based on Ass.~\ref{ass:iqc}, the following exponential stability bound was shown in~\cite{Lessard2016}.

\begin{theorem}[{Exponential stability,~\cite[Thm.~4\footnotemark]{Lessard2016}}]\label{thm:iqc}
    Let Ass.~\ref{ass:wp},~\ref{ass:iqc}, and $(r,d)=0$ hold. Then system~\eqref{eq:error_interconnection} is $\rho$-exponentially stable, i.e., $\norm{\e_\t} \leq \sqrt{\mathrm{cond}(\P)} \rho^{\t} \norm{\e_0}$ for all $\t\geq 0$.
\end{theorem}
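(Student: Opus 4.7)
My plan is to prove this via a standard IQC-based dissipation argument using the augmented state $\xi_\t=[\e_\t^\top,\filtx_\t^\top]^\top$. With $\nomy = 0$ and $\d = 0$, the interconnection~\eqref{eq:error_interconnection} together with the filter~\eqref{eq:filt} forms the autonomous augmented dynamics $\xi_{\t+1}=\allA\xi_\t+\allB\w_\t$, $\filty_\t=\allC\xi_\t+\allD\w_\t$ of~\eqref{eq:aug_dyn}, driven by $\w_\t = \Delta(\y)_\t$. I would then define the storage function $V(\xi)=\xi^\top \P\,\xi$.

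The next step is to left- and right-multiply the LMI~\eqref{eq:lmi} by $[\xi_\t^\top,\w_\t^\top]^\top$ and its transpose, which yields the pointwise dissipation inequality
\begin{align*}
V(\xi_{\t+1})-\rho^2 V(\xi_\t)+\filty_\t^\top \M\filty_\t \le 0.
\end{align*}
Multiplying by $\rho^{-2(\t+1)}$ and summing from $\t=0$ to $\T-1$ gives the telescoping bound
\begin{align*}
\rho^{-2\T}V(\xi_\T)-V(\xi_0)+\rho^{-2}\sum_{\t=0}^{\T-1}\rho^{-2\t}\filty_\t^\top\M\filty_\t\le 0.
\end{align*}
Invoking the $\rho$-hard IQC~\eqref{eq:hardIQC} from Ass.~\ref{ass:iqc}, the sum on the left is nonnegative, so I obtain $V(\xi_\T)\le\rho^{2\T}V(\xi_0)$ for every $\T\ge 0$.

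Finally, I would translate this bound on $\xi$ into the desired bound on $\e$. Since $\filtx_0=0$ by Ass.~\ref{ass:iqc}, we have $\xi_0=[\e_0^\top,0]^\top$ and hence $V(\xi_0)=\e_0^\top\P_{11}\e_0\le\lambda_{\max}(\P)\,\|\e_0\|^2$, using that the top-left block's largest eigenvalue is bounded by $\lambda_{\max}(\P)$ (Cauchy interlacing). On the other side, $V(\xi_\T)\ge\lambda_{\min}(\P)\,\|\xi_\T\|^2\ge\lambda_{\min}(\P)\,\|\e_\T\|^2$. Combining the three inequalities and taking square roots yields $\|\e_\T\|\le\sqrt{\mathrm{cond}(\P)}\,\rho^\T\|\e_0\|$, which is the claim.

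The argument is essentially a direct invocation of the dissipation inequality and the IQC; the only point requiring care is the bookkeeping of the $\rho$-powers so that the weighting in the telescoping sum matches exactly the weighting in the IQC definition~\eqref{eq:hardIQC}, and the use of the initial condition $\filtx_0=0$ to keep the right-hand side proportional to $\|\e_0\|^2$ rather than to $\|\xi_0\|^2$. These are small but essential; apart from them, the proof is routine.
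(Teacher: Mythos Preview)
Your proof is correct and follows exactly the standard dissipation argument of~\cite[Thm.~4]{Lessard2016}, which the paper cites rather than reproves; the same multiplication-by-$[\xi_\t^\top,\w_\t^\top]$, telescoping, and invocation of the $\rho$-hard IQC appear verbatim in the paper's proof of Lemma~\ref{lem:error_bound} for the extended setting. There is nothing to add.
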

\footnotetext{To be precise, the theorem is slightly altered compared to~\cite{Lessard2016}: First,~\cite{Lessard2016} requires only semi definiteness of the LMI~\eqref{eq:lmi}, i.e.,  $\preceq 0$. And second, in~\cite{Lessard2016} the theorem is stated for $\Dw=0$. However, the result still holds due to the well-posedness assumption; the proof is analogous.}

Notice that the authors of~\cite{Lessard2016} analyzed this interconnection with $\d=0$ and $\nomy=0$.
Hence, this result cannot immediately be applied to our setup, as we have $\d\neq 0$ and $\nomy\neq 0$.
However, Lemma~\ref{lem:error_bound} in Sec.~\ref{sec:error_sys} below extends this result under the same Ass.~\ref{ass:iqc} to $\d\neq 0$ and $\nomy\neq 0$ and additionally includes an estimate to account for the nominal initial state optimization~\eqref{eq:error_dyn_cl}.

In the remainder of this section, we provide more insights into $\rho$-hard IQCs by bridging the gap from the time-domain perspective to the frequency-domain framework of $\rho$-IQCs~\cite{Boczar2015}, whereas the error bounds relevant for the MPC are developed in Sec.~\ref{sec:error_sys}.
In particular, we will see that a general class of frequency-domain $\rho$-IQCs can be equivalently formulated as time-domain $\rho$-hard IQCs. 

\begin{definition}[{$\rho$-IQC,~\cite[Definition 6]{Boczar2015}}]
    Let $\rho\in(0,1]$ and $\Pi=\Pi^*\in\RLinf^{(\dimy+\dimw)\times (\dimy+\dimw)}$. A bounded operator $\Delta:\ltwoe{\dimy}\to\ltwoe{\dimw}$ is said to satisfy the $\rho$\emph{-IQC defined by the multiplier }$\Pi$ if for all $\y\in\ltworho{\dimy}$ and $\w=\Delta(\y)$ the following inequality holds\\[-0.6cm]
    \begin{align}\label{eq:iqc_fd}
        \int_{\mathbb T} \begin{bmatrix}
            \hat \y (\rho z) \\ \hat \w(\rho z)
        \end{bmatrix}^* \Pi(\rho z) \begin{bmatrix}
            \hat \y (\rho z) \\ \hat \w(\rho z)
        \end{bmatrix} \dint z \geq 0.
    \end{align}
\end{definition}

There is also an exponential stability result with $\rho$-IQCs formulated in the frequency domain.

\begin{theorem}[{Exponential Stability,~\cite[Thm.~8]{Boczar2015}}]\label{thm:iqc_fd}
    Let $\rho\in(0,1)$, $\G_\rho \in \RHinf$ and $\Delta$ be a bounded causal operator. Suppose that for all $\tau\in[0,1]$:
    \begin{enumerate}
        \item the interconnection of $G$ and $\tau\Delta$ is  well-posed
        \item $\tau\Delta$ satisfies the $\rho$-IQC defined by $\Pi$
        \item there exists $\varepsilon>0$ such that\\[-0.6cm]
        \begin{align}\label{eq:fdi}
            \begin{bmatrix} \G(\rho z) \\ I \end{bmatrix}^*\Pi(\rho z) 
            \begin{bmatrix} \G(\rho z) \\ I \end{bmatrix} 
            \preceq -\varepsilon I, \quad  \forall z \in \mathbb T.
        \end{align}
    \end{enumerate}
    Then the interconnection of $\G$ and $\Delta$ as shown in Fig.~\ref{fig:feedback_interconnection} is exponentially stable with rate $\rho$ for $\nomy=0$, $\d=0$.
\end{theorem}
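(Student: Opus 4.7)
My plan is to reduce Theorem~\ref{thm:iqc_fd} to the classical discrete-time IQC stability theorem in the standard $\ell_2$ setting via a $\rho$-shift of signals and operators. For any sequence $q$, define the rescaled sequence $\tilde q_\t := \rho^{-\t} q_\t$; then $q\in\ltworho{n}$ if and only if $\tilde q\in\ltwo{n}$, with equal norms. Under this rescaling, the transfer function $\G$ becomes $\tilde\G(z):=\G(\rho z)=\G_\rho$, which by hypothesis lies in $\RHinf$ and hence induces a bounded causal $\ell_2$ operator. The uncertainty $\Delta$ likewise induces a causal operator $\tilde\Delta$ on rescaled signals by $\tilde\Delta(\tilde\y)_\t := \rho^{-\t}\Delta(\y)_\t$ where $\y_\t = \rho^\t \tilde\y_\t$.

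Next, I would verify that the three assumptions translate verbatim to the classical setting for the pair $(\tilde\G,\tilde\Delta)$: well-posedness is preserved since the rescaling is a bijection on $\ltwoe{n}$; a change of variable $z\mapsto \rho z$ in~\eqref{eq:iqc_fd} shows that $\tau\Delta$ satisfying the $\rho$-IQC defined by $\mult$ is equivalent to $\tau\tilde\Delta$ satisfying the standard unweighted $\ell_2$-IQC defined by the shifted multiplier $\mult_\rho\in\RLinf$; and the strict frequency-domain inequality~\eqref{eq:fdi} is precisely the classical Megretski--Rantzer FDI for $\tilde\G$ and $\mult_\rho$ on the unit circle $\mathbb T$.

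With these translations in place, the classical discrete-time IQC stability theorem applies. Its proof proceeds by the standard homotopy argument in $\tau\in[0,1]$: the set of $\tau$ for which the interconnection $(\tilde\G,\tau\tilde\Delta)$ is $\ell_2$-stable is open by continuous dependence on $\tau$, closed by the uniform a~priori bound obtained from the translated IQC and FDI, and contains $\tau=0$ since $\tilde\G\in\RHinf$; hence it contains $\tau=1$. Reversing the rescaling, any zero-input response of the interconnection~\eqref{eq:error_interconnection} of $\G$ and $\Delta$ lies in $\ltworho{n}$, which combined with $\tilde\G\in\RHinf$ and a standard state-space argument on the extended dynamics yields the claimed $\rho$-exponential decay for $\nomy=0$, $\d=0$. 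The main obstacle I expect is establishing that $\tilde\Delta$ is a bounded causal operator on $\ell_2$; this is not automatic from the boundedness of $\Delta$ on $\ltwoe{n}$, and relies on combining the $\rho$-IQC hypothesis with uniform well-posedness across the homotopy parameter $\tau$ to close the usual loop bound.
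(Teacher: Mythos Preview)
The paper does not prove Theorem~\ref{thm:iqc_fd}; it is stated as a quotation of \cite[Thm.~8]{Boczar2015} and used as background to motivate the subsequent time-domain developments. There is therefore no ``paper's own proof'' to compare against.

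That said, your proposed reduction via the $\rho$-shift $q_\t\mapsto\rho^{-\t}q_\t$ is exactly the mechanism underlying the cited result in \cite{Boczar2015}: one transports the interconnection $(\G,\Delta)$ to $(\G_\rho,\tilde\Delta)$, observes that the $\rho$-IQC becomes an ordinary $\ell_2$-IQC with multiplier $\Pi_\rho$ and that \eqref{eq:fdi} is the standard FDI for $\G_\rho$, and then invokes the classical homotopy argument of Megretski--Rantzer. So your route is the intended one.

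You have also correctly flagged the genuine subtlety: boundedness of $\tilde\Delta$ on $\ltwo{\dimy}$ is \emph{not} implied by boundedness of $\Delta$ on $\ltwoe{\dimy}$ alone. In the cited reference this is handled by assuming, in effect, that $\Delta$ maps $\ltworho{\dimy}$ to $\ltworho{\dimw}$ boundedly (equivalently, $\tilde\Delta$ is $\ell_2$-bounded), or by closing the loop through the uniform well-posedness and the $\rho$-IQC itself. Your plan to extract this from the homotopy hypotheses is reasonable, but be aware that making it fully rigorous typically requires either an explicit $\ell_{2,\rho}$-boundedness hypothesis on $\Delta$ or a careful a~priori argument; the statement as quoted in the paper suppresses this technicality.
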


In order to compare Thm.~\ref{thm:iqc} and Thm.~\ref{thm:iqc_fd}, we note that each frequency-domain $\rho$-IQC can be related to time domain by a factorization of the multiplier $\Pi$.

\begin{definition}[$\rho$-factorization] 
Let $\filt \in \RLinf$, $\Pi\in\RLinf$, and $\M\in\R^{\dimfilty\times\dimfilty}$. We call $(\filt,\M)$ a $\rho$\emph{-factorization} of $\Pi$ if $\Pi_\rho=(\filt_\rho)^\sim\M\filt_\rho$ and $\filt_\rho\in\RHinf$.
\end{definition}

If $(\filt,\M)$ is a $\rho$-factorization of $\Pi$, then, as shown in~\cite[Rem.~10]{Boczar2015} by applying Parseval's Theorem, the $\rho$-IQC defined by $\Pi$ is satisfied if and only if~\eqref{eq:hardIQC} holds for $\T=\infty$.
Hence, $\rho$-hard IQCs imply $\rho$-IQCs while the opposite is in general not true.
Further, it was shown in~\cite[Corollary 12]{Boczar2015} that~\eqref{eq:fdi} is equivalent to the existence of $\P=\P^\top$ satisfying~\eqref{eq:lmi}.
Note that again, the time-domain requirement $P\succ 0$ is stricter.
This leads to the interesting question for which multipliers the frequency-domain $\rho$-IQC also implies a $\rho$-hard IQC in the time domain and for which multipliers $\P$ is guaranteed to be positive definite.
If $\rho=1$, it is known that so-called strict Positive Negative (PN) multipliers have both properties~\cite{Hu2016a}.

As defined in~\cite[Definition~4]{Hu2016a}, strict PN-multipliers are multipliers $\Pi=\Pi^\sim\in\RLinf$ where the first block diagonal entry with dimension $\dimy\times\dimy$ is positive definite and the second block diagonal entry with dimension $\dimw\times\dimw$ is negative definite for all frequencies $z\in\mathbb T$.
The following theorem extends the results of~\cite{Hu2016a} to $\rho$-IQCs with general $\rho\in(0,1]$ and show that strict PN multipliers admit $\rho$-hard factorizations and that~\eqref{eq:fdi} leads to~\eqref{eq:lmi} with $P\succ 0$.

\begin{theorem}[From $\rho$-IQCs to $\rho$-hard IQCs]\label{thm:pn-hard}
    Let $\rho \in (0,1]$, $G_\rho\in\RHinf$, $\Pi\in\RLinf$, and $\Pi_\rho$ be a strict PN multiplier. Then there exists a $\rho$-factorization $(\filt,\M)$ of $\Pi$ such that all $\Delta$ satisfying the $\rho$-IQC defined by $\Pi$ also satisfy the $\rho$-hard IQC defined by $(\filt,M)$. Further, if~\eqref{eq:fdi} holds, then there exists a $\P\succ 0$ such that~\eqref{eq:lmi} holds.
\end{theorem}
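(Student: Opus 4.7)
\emph{Proof plan.} The idea is to reduce to the standard ($\rho=1$) results of~\cite{Hu2016a} via the spectral substitution $z \mapsto \rho z$, and then translate the hard factorization and the LMI back to the $\rho$-weighted setting through an exponential weighting of signals and a diagonal congruence on the LMI.

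First, I would set $\tilde{\G} := \G_\rho \in \RHinf$ and $\tilde{\mult} := \mult_\rho$, which is a strict PN multiplier in $\RLinf$ by assumption. The canonical $J$-spectral factorization for strict PN multipliers~\cite{Hu2016a} then yields a constant $\M$ and $\tilde{\filt}$ with $\tilde{\filt}, \tilde{\filt}^{-1} \in \RHinf$ such that $\tilde{\mult} = \tilde{\filt}^\sim \M \tilde{\filt}$, and every bounded causal operator satisfying the standard frequency-domain IQC with multiplier $\tilde{\mult}$ also satisfies the standard hard IQC defined by $(\tilde{\filt}, \M)$. Setting $\filt(z) := \tilde{\filt}(z/\rho)$ gives $\filt_\rho = \tilde{\filt} \in \RHinf$ and $\mult_\rho = \filt_\rho^\sim \M \filt_\rho$, so $(\filt, \M)$ is a $\rho$-factorization of $\mult$ in the sense of the preceding definition.

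Next, I would bridge the time-domain conditions via the weighting $\tilde{q}_\t := \rho^{-\t} q_\t$. A short state-space calculation, using that $\filt_\rho$ has realization $(\rho^{-1}\filtA,\, [\filtBin\ \filtBout],\, \rho^{-1}\filtC,\, [\filtDin\ \filtDout])$, shows that the output of $\filt_\rho$ driven by $(\tilde{\y}, \tilde{\w})$ with zero initial state equals $\tilde{\filty}_\t = \rho^{-\t} \filty_\t$. Hence $\sum_{\t=0}^{\T-1} \rho^{-2\t}\filty_\t^\top \M \filty_\t \ge 0$ is equivalent to the standard hard IQC $\sum_{\t=0}^{\T-1} \tilde{\filty}_\t^\top \M \tilde{\filty}_\t \ge 0$ for the pair $(\tilde{\filt}, \M)$. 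By Parseval together with $\hat{\tilde{\y}}(z) = \hat{\y}(\rho z)$, the $\rho$-IQC of $\mult$ for $\Delta$ is the standard IQC of $\tilde{\mult}$ for the weighted operator $\tilde{\Delta}(\tilde{\y})_\t := \rho^{-\t}\Delta(\y)_\t$ (with $\y_\t = \rho^\t \tilde{\y}_\t$), which inherits causality and boundedness from $\Delta$. Applying the $\rho=1$ hard-factorization implication to $\tilde{\Delta}$ and undoing the weighting then yields the $\rho$-hard IQC for $\Delta$ defined by $(\filt, \M)$.

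For the LMI claim, the same substitution makes~\eqref{eq:fdi} equivalent to the standard ($\rho=1$) frequency-domain inequality for $(\tilde{\G}, \tilde{\mult})$, to which the strict-PN KYP result from~\cite{Hu2016a} applies and returns a $\tilde{\P}\succ 0$ solving the standard LMI built from the augmented realization of $(\tilde{\G}, \tilde{\filt})$. Because that augmented realization satisfies $\tilde{\allA} = \rho^{-1}\allA$, $\tilde{\allB} = \allB$, $\tilde{\allC} = \rho^{-1}\allC$, $\tilde{\allD} = \allD$, a congruence by the invertible block-diagonal matrix $\diag(\rho I, I)$ maps this standard LMI exactly onto~\eqref{eq:lmi} with $\P = \tilde{\P}\succ 0$. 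The step I expect to be the main obstacle is the rigorous transfer of the hard IQC through the exponential weighting, in particular checking that the truncation/causality argument used in the $\rho=1$ proof of~\cite{Hu2016a} survives the $\rho^{-\t}$ rescaling of inputs and outputs; this step hinges decisively on $\tilde{\filt}^{-1} \in \RHinf$, which is guaranteed by the strict PN structure of $\mult_\rho$.
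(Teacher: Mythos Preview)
Your proposal is correct and follows essentially the same route as the paper: apply the $J$-spectral factorization of~\cite{Hu2016a} to $\Pi_\rho$, pull back to obtain $\Psi$ via the substitution $z\mapsto z/\rho$, and transfer both the hard IQC and the LMI through the exponential weighting $\tilde q_t=\rho^{-t}q_t$ (the paper writes this as $\Delta'=\rho_-\circ\Delta\circ\rho_+$ and cites~\cite[Prop.~7]{Boczar2015} for the frequency-domain side). The only notable deviation is in the LMI step: the paper goes the other direction, starting from a symmetric $P$ solving~\eqref{eq:lmi} (via~\cite[Cor.~12]{Boczar2015}), showing $\rho^2 P$ solves the standard LMI for $(G_\rho,\hat\Psi)$, and invoking~\cite{Hu2016a} to conclude $P\succeq 0$ before perturbing to strict positivity---so when you write that~\cite{Hu2016a} ``returns a $\tilde P\succ 0$'' you should expect to need the same small perturbation argument, since the cited result only guarantees $\tilde P\succeq 0$.
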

A proof of this result can be found in Appendix~\ref{sec:proof_thm_2}.
The theorem shows that a large class of frequency-domain $\rho$-IQC multipliers have a $\rho$-hard time-domain factorization. 
This is especially helpful since many IQCs are more conveniently derived in the frequency domain.

\section{Exponentially Stable Error Bounding System}\label{sec:error_sys}

In this section, we derive a bound on the error $\e_{\k|\t}$ based on Ass.~\ref{ass:iqc} by making use of the $\rho$-hard IQC that bounds $\Delta$.
Instead of a static error bound as in~\cite{Falugi2014}, we reduce conservatism with a dynamic error bound in form of a scalar exponentially stable system with the inputs $\d$ and $\nomy$, i.e., that depends on the disturbance and the nominal excitation of the uncertainty.
This idea is inspired by the work \cite{Loehning2014}, where an error bounding system was used to describe the deviation of model order reductions.

First, let us introduce the notation $\filtx_{\k|\t}$ for the possible future trajectory of the state of filter $\filt$ from \eqref{eq:filt}
\begin{subequations}\label{eq:filt_ss}
    \begin{align}
    \filtx_{\k+1|\t} &= \filtA\filtx_{\k|\t}+\filtBin\y_{\k|\t}+\filtBout\w_{\k|\t}\\
    \filty_{\k|\t} &= \filtC\filtx_{\k|\t}+\filtDin\y_{\k|\t}+\filtDout\w_{\k|\t}
    \end{align}
\end{subequations}
 with $\filtx_{0|\t+1}=\filtx_{1|\t}$ and $\filtx_{0|0}=\filtx_0$.
Note that this guarantees $\filtx_{0|\t}=\filtx_\t$ and $\filty_{0|\t}=\filty_\t$ since $\w_{0|\t}=\w_\t$ and $\y_{0|\t}=\y_\t$ as discussed above.
Second, with the help of this notation and based on Ass.~\ref{ass:iqc}, we can bound the discrepancy $\e_{\k|\t}$ between the nominal state $\nomx_{\k|\t}$ and the possible future state $\x_{\k|\t}$ as shown in the following lemma.

\begin{lemma}[Exponentially stable error bounding system]\label{lem:error_bound}
        Consider the interconnection \eqref{eq:error_interconnection} and let Ass.~\ref{ass:iqc} hold. 
        Then, there exist $\Gamma\in\R^{\dimy\times\dimy}$, $\Gamma\succ 0$ and $\gamma>0$ satisfying the following LMI
        \begin{align}\label{eq:finsler_lmi}
        \newcommand{\colspace}{\!\!\!}\colspace
        \begin{bmatrix}
        \\\\ \symLMI \\\\\\
        \end{bmatrix}^\top\!\! \begin{bmatrix}
        -\rho^2\P & \colspace 0 & \colspace 0 & \colspace 0 \\
        0 & \colspace\P & \colspace 0 & \colspace 0\\
        0 & \colspace 0 & \colspace\M & \colspace 0 \\ 0& \colspace 0&\colspace 0&\colspace\!- \Lambda
        \end{bmatrix}\begin{bmatrix}
        I & \colspace 0 & 0\\
        \allA & \colspace\allB & \colspace\begin{bmatrix} \BGd & \colspace 0 \\ \filtBin\DGd &\colspace \filtBin \end{bmatrix}\\ \allC &\colspace \allD & \colspace\begin{bmatrix}    \filtDin\DGd & \colspace\filtDin \end{bmatrix} \\ 0&\colspace 0&I
        \end{bmatrix}\prec 0
        \end{align} 
        with $\Lambda=\diag(\gamma \Xi,\Gamma)$.
        Further, for any sequences $\d \in \ltwoe{\dimd}$, $\left(\nomy_{\cdot|\t}\right)_{\t\in\mathbb{N}}$, $\nomy_{\cdot|\t} \in \ltwoe{\dimy}$, and $\e_{0|\cdot}\in\ltwoe{\dimx}$ the following inequality holds for all times $\t\geq 0$ and all predictions $\k\geq 0$
    \begin{align}\label{eq:error_bound_ineq}
     \norm{\begin{bmatrix}
        \e_{\k|\t} \\
        \filtx_{\k|\t}
        \end{bmatrix}}_\P^2\leq \c_{\k|\t},
    \end{align} with 
    $\c_{0|0}=\norm{\big[\begin{matrix} \e_{0|0}^\top&0 \end{matrix}\big]^\top}_{\P}^2$ and $\c_{\k|\t}$ recursively defined by
    \begin{subequations}\label{eq:error_bound}
        \begin{align}
        \c_{\k+1|\t} &= \rho^2\c_{\k|\t} +\gamma\norm{\d_{\t+\k}}^2_{\Xi}+ \norm{\nomy_{\k|\t}}^2_{\Gamma},\label{eq:c_ol}\\
        \c_{0|\t+1} &= \c_{1|\t} +\norm{\!\begin{bmatrix}
            \e_{0|\t+1} \\ \filtx_{0|\t+1}
            \end{bmatrix}\!}_\P^2 - \norm{\!\begin{bmatrix}
            \e_{1|\t} \\ \filtx_{1|\t}
            \end{bmatrix}\!}_\P^2. \label{eq:c_cl}
        \end{align}
    \end{subequations}
\end{lemma}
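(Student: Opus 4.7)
The lemma splits into two tasks: existence of $(\gamma,\Gamma)$ satisfying~\eqref{eq:finsler_lmi}, and the bound~\eqref{eq:error_bound_ineq}. For existence, I would observe that~\eqref{eq:finsler_lmi} augments~\eqref{eq:lmi} with extra block rows/columns encoding the exogenous inputs $(\d,\nomy)$ together with a penalty block $-\Lambda$. Since~\eqref{eq:lmi} is strict by Ass.~\ref{ass:iqc}, the leading $(2\times 2)$ subblock of the matrix in~\eqref{eq:finsler_lmi} is already strictly negative definite, and a Schur-complement argument shows that any sufficiently large $\gamma>0$ and $\Gamma\succ 0$ (so that $\Lambda=\diag(\gamma\Xi,\Gamma)$ dominates the cross-coupling produced by the extended input matrices) render~\eqref{eq:finsler_lmi} negative definite.

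Set $\tilde\x_{\k|\t}:=[\e_{\k|\t}^\top,\filtx_{\k|\t}^\top]^\top$ and $V_{\k|\t}:=\|\tilde\x_{\k|\t}\|_\P^2$. Applying~\eqref{eq:finsler_lmi} to $[\tilde\x_{\k|\t}^\top,\w_{\k|\t}^\top,\d_{\t+\k}^\top,\nomy_{\k|\t}^\top]^\top$ and using the extended dynamics from~\eqref{eq:aug_dyn} augmented by the $(\d,\nomy)$ channels yields the one-step dissipation
\begin{align*}
V_{\k+1|\t}+\filty_{\k|\t}^\top\M\filty_{\k|\t}\leq\rho^2 V_{\k|\t}+\gamma\|\d_{\t+\k}\|_\Xi^2+\|\nomy_{\k|\t}\|_\Gamma^2.
\end{align*}
Subtracting the open-loop recursion~\eqref{eq:c_ol} and telescoping over $\k=0,\ldots,K-1$ produces
\begin{align*}
V_{K|\t}-\c_{K|\t}\leq \rho^{2K}(V_{0|\t}-\c_{0|\t})-\rho^{2(K-1)}S_{K|\t},
\end{align*}
where $S_{K|\t}:=\sum_{\k=0}^{K-1}\rho^{-2\k}\filty_{\k|\t}^\top\M\filty_{\k|\t}$. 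Hence the desired $V_{K|\t}\leq\c_{K|\t}$ reduces to the single estimate $S_{K|\t}\geq\rho^2(V_{0|\t}-\c_{0|\t})$ for every $\t,K\geq 0$.

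I close this by induction on $\t$ of the joint statement: (i) $V_{\k|\t}\leq\c_{\k|\t}$ for all $\k\geq 0$; (ii) $\rho^2(\c_{0|\t}-V_{0|\t})\geq\sum_{\tau=0}^{\t-1}\rho^{2(\t-\tau)}\filty_\tau^\top\M\filty_\tau$. The base $\t=0$ holds because $\filtx_0=0$ gives $V_{0|0}=\c_{0|0}$ and the sum in~(ii) is empty. In the step, the closed-loop jump~\eqref{eq:c_cl} gives $\c_{0|\t+1}-V_{0|\t+1}=\c_{1|\t}-V_{1|\t}$, and combining this with the one-step dissipation at $\k=0$ and~(ii) at $\t$ propagates~(ii) to $\t+1$. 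For~(i) at $\t+1$, the case $\k=0$ follows from the same jump identity together with~(i) at $\t$ applied at $\k=1$; for $\k\geq 1$, I apply the $\rho$-hard IQC~\eqref{eq:hardIQC} to the concatenated input of $\Delta$ consisting of the realized past $(\y_\tau)_{\tau\leq \t}$ and the what-if future $(\y_{\k|\t+1})_{\k\geq 0}$. Since the filter starts at $\filtx_0=0$, this yields $\sum_{\tau=0}^{\t}\rho^{2(\t+1-\tau)}\filty_\tau^\top\M\filty_\tau+S_{K|\t+1}\geq 0$, which together with~(ii) at $\t+1$ gives exactly $S_{K|\t+1}\geq\rho^2(V_{0|\t+1}-\c_{0|\t+1})$, closing the telescope.

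The hard part will be the interplay between~(ii) and the IQC: the IQC is only directly available for actual trajectories starting from $\filtx_0=0$, whereas the telescope argument needs an IQC-like bound on the prediction sequence, which starts from the non-zero state $\filtx_{0|\t}=\filtx_\t$. The design of~\eqref{eq:c_cl} is precisely what makes $\c_{0|\t}$ accumulate the "past IQC mass" $\sum_{\tau<\t}\rho^{2(\t-\tau)}\filty_\tau^\top\M\filty_\tau$, and the factor $\rho^2$ appearing in the telescope slack matches the $\rho^{2\t}$-weighting that arises when re-centering the IQC sum to start at time $\t$; verifying this alignment carefully is where the bookkeeping concentrates.
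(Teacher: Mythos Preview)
Your proposal is correct and uses the same core ingredients as the paper: a Finsler/Schur argument for existence of $(\gamma,\Gamma)$, the one-step dissipation inequality obtained by left/right-multiplying~\eqref{eq:finsler_lmi}, and the $\rho$-hard IQC applied to the concatenated past-plus-prediction filter output. The paper organizes the telescoping directly as a three-part sum $\Sigma_1+\Sigma_2+\Sigma_3$ (the step at $0|0$, then along $0|\tau$ for $\tau=1,\ldots,\t$ using~\eqref{eq:c_cl}, then along $\kappa|\t$ for $\kappa=1,\ldots,\k-1$ using~\eqref{eq:c_ol}) rather than by induction on $\t$, but your auxiliary invariant~(ii) is exactly the quantity that the paper's $\rho^{2(\k-1)}(\rho^{2\t}\Sigma_1+\Sigma_2)$ accumulates, so the two arguments are equivalent reorganizations of one another.
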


\begin{proof}
    First, we derive~\eqref{eq:finsler_lmi} from~\eqref{eq:lmi} by using Finsler's Lemma; second, we use~\eqref{eq:finsler_lmi} to show that a dissipation inequality holds; and third, summing up this inequality from $0$ to $\t$ yields~\eqref{eq:error_bound_ineq} and~\eqref{eq:error_bound}.
    
    The LMI~\eqref{eq:lmi} guarantees that~\eqref{eq:finsler_lmi} holds whenever multiplied from left with $\begin{bmatrix}
        \e^\top_\k & \filtx^\top_\k & \w^\top_\k & \d^\top_\k & \nomy_\k^\top
    \end{bmatrix}$ and from right with its transpose for $\d_\k=0$, $\nomy_\k=0$.
    Thus, Finsler's Lemma \cite{Finsler1936} guarantees the existence of $\bar \gamma>0$ large enough such that~\eqref{eq:finsler_lmi} holds with $\Lambda =\bar \gamma I$ also for nonzero $\nomy_\k$, $\d_\k$.
    Therefore, we know that any $\Gamma\succeq\bar \gamma I$ and $\gamma \Xi \succeq \bar \gamma I$ satisfy~\eqref{eq:finsler_lmi} with $\Lambda=\diag(\Gamma,\gamma \Xi)\succeq \bar \gamma I$ as well.
    
    For the second part, we use
    \newcommand{\colspace}{\!\!\!}
    \begin{align*}
    &\begin{bmatrix}
    \allA & \colspace\allB & \colspace\begin{bmatrix} \BGd & \colspace 0 \\ \filtBin\DGd &\colspace \filtBin \end{bmatrix}\\[0.25cm] \allC &\colspace \allD & \colspace\begin{bmatrix}    \filtDin\DGd & \colspace\filtDin \end{bmatrix}
    \end{bmatrix}\begin{bmatrix}
    \e_{\kappa|\tau} \\ \filtx_{\kappa|\tau} \\ \w_{\kappa|\tau} \\ \d_{\kappa+\tau} \\ \nomy_{\kappa|\tau}
    \end{bmatrix} \\
    &\quad\refeq{(\ref{eq:y_pred}, \ref{eq:aug_dyn})}{=}\ \ \begin{bmatrix}
    \Ae\e_{\kappa|\tau}+\Bw\w_{\kappa|\tau} +\BGd\d_{\kappa+\tau} \\ 
    \filtA \filtx_{\kappa|\tau} + \filtBin \y_{\kappa|\tau} + \filtBout  \w_{\kappa|\tau} \\ 
    \filtC \filtx_{\kappa|\tau} + \filtDin \y_{\kappa|\tau} + \filtDout  \w_{\kappa|\tau} 
    \end{bmatrix}\ \ \refeq{(\ref{eq:error_dyn}, \ref{eq:filt_ss})}{=}\ \     \begin{bmatrix}\e_{\kappa+1|\tau} \\ \filtx_{\kappa+1|\tau} \\ \filty_{\kappa|\tau}
    \end{bmatrix}
    \end{align*}
    when multiplying \eqref{eq:finsler_lmi} from the left with $\begin{bmatrix}
      \e_{\kappa|\tau}^\top & \filtx_{\kappa|\tau}^\top & \w_{\kappa|\tau}^\top & \d_{\kappa+\tau}^\top & \nomy_{\kappa|\tau}^\top
      \end{bmatrix}$ and from the right with its transpose, which leads to the dissipation inequality
    \begin{align}\label{eq:diss-ineq}
     \begin{split}
     &\norm{\begin{bmatrix}
     \e_{\kappa+1|\tau} \\
     \filtx_{\kappa+1|\tau}
     \end{bmatrix}}_\P^2 - \rho^2 
     \norm{\begin{bmatrix}
     \e_{\kappa|\tau} \\
     \filtx_{\kappa|\tau}
     \end{bmatrix}}_\P^2 
     \\ & \qquad \quad + \filty^\top_{\kappa|\tau} \M \filty_{\kappa|\tau}-\gamma \norm{ \d_{\kappa+\tau}}_\Xi^2- \norm{\nomy_{\kappa|\tau}}^2_\Gamma\leq 0.
     \end{split}
    \end{align}
    In order to get rid of the unknown $\filty_{\kappa|\tau}$, we utilize the IQC~\eqref{eq:hardIQC} by multiplying \eqref{eq:diss-ineq} with suitable powers of $\rho^2$ and sum it up over the past from $0$ to $\t$ (for $\kappa =0 $) and over the predictions from $0|\t$ to $\k|\t$.
    Let us start with $0|0$, we plug $\c_{0|0}=\norm{\big[\begin{matrix}
            \e_{0|0}^\top&\filtx_{0|0}
        \end{matrix}\big]^\top}_{\P}^2$ and~\eqref{eq:c_ol} in~\eqref{eq:diss-ineq} to obtain
    \begin{align*}
        0\geq \norm{\begin{bmatrix}
                \e_{1|0} \\
                \filtx_{1|0}
        \end{bmatrix}}_\P^2+ \filty^\top_{0|0} \M \filty_{0|0}-\c_{1|0} =: \Sigma_1
    \end{align*}
    As next part we consider $0|1$ to $0|\t$, i.e., $\kappa=0$, $\tau \in [1,\t]$. 
    In this case, we can use~\eqref{eq:error_bound} to rewrite~\eqref{eq:diss-ineq} in this case as
    \begin{align}\label{eq:diss-ineq2}
        \begin{split}
            &\norm{\begin{bmatrix}
                    \e_{1|\tau} \\
                    \filtx_{1|\tau}
            \end{bmatrix}}_\P^2 - \rho^2 
            \norm{\begin{bmatrix}
                    \e_{1|\tau-1} \\
                    \filtx_{1|\tau-1}
            \end{bmatrix}}_\P^2 
            \\ & \qquad \quad + \filty^\top_{0|\tau} \M \filty_{0|\tau}-\c_{1|\tau}+\rho^2\c_{1|\tau-1}\leq 0.
        \end{split}
    \end{align}
    Now we use a telescoping sum argument by multiplying~\eqref{eq:diss-ineq2} with $\rho^{2(t-\tau)}$ and sum over $\tau$ from $1$ to $\t$:
    \begin{align*}
        0\refeq{\eqref{eq:diss-ineq2}}{\geq}\   &\norm{\begin{bmatrix}
                \e_{1|\t} \\
                \filtx_{1|\t}
        \end{bmatrix}}_\P^2 -\rho^{2\t} \norm{\begin{bmatrix}\e_{1|0} \\ \filtx_{1|0} \end{bmatrix}}_{\P}^2 \\
        & + \sum_{\tau=1}^{\t}\rho^{2(\t-\tau)} \filty^\top_{0|\tau} \M \filty_{0|\tau}
        -\c_{1|\t}+\rho^{2\t} \c_{1|0} =:\Sigma_2
    \end{align*}
    As third part, we consider $1|\t$ to $\k|\t$, i.e.,  $\kappa$ from $1$ to $\k-1$ for $\tau=\t$. In this case, we can use~\eqref{eq:c_ol} to rewrite~\eqref{eq:diss-ineq} as
    \begin{align}\label{eq:diss-ineq3}
        \begin{split}
            &\norm{\begin{bmatrix}
                    \e_{\kappa+1|\t} \\
                    \filtx_{\kappa + 1|\t}
            \end{bmatrix}}_\P^2 - \rho^2 
            \norm{\begin{bmatrix}
                    \e_{\kappa |\t} \\
                    \filtx_{\kappa|\t}
            \end{bmatrix}}_\P^2 
            \\ & \qquad \quad + \filty^\top_{\kappa |\t} \M \filty_{\kappa|\t}-\c_{\kappa+1|\t}+\rho^2\c_{\kappa|\t}\leq 0.
        \end{split}
    \end{align}
    Now we use a telescoping sum argument by multiplying~\eqref{eq:diss-ineq3} with $\rho^{2(k-\kappa-1)}$ and sum over $\kappa$ from $1$ to $\k-1$:
    \begin{align*}
        0&\refeq{\eqref{eq:diss-ineq3}}{\geq}\, \norm{\begin{bmatrix}
                \e_{\k|\t} \\
                \filtx_{\k|\t}
        \end{bmatrix}}_\P^2 -\rho^{2(\k-1)} \norm{\begin{bmatrix}\e_{1|\t} \\ \filtx_{1|\t} \end{bmatrix}}_{\P}^2 \\
    &\quad+ \sum_{\kappa=1}^{\k-1}\rho^{2(\k-\kappa-1)}\filty^\top_{\kappa|\t} \M \filty_{\kappa|\t}-\c_{\k|\t}+\rho^{2(\k-1)}\c_{1|\t} =:\Sigma_3.
    \end{align*}
    Finally, we sum up $\Sigma_1$, $\Sigma_2$ and $\Sigma_3$ with suitable factors of $\rho^{2}$. Since the sequence $\hat\filty_i:=\filty_{0|\tau}$ for $i=\tau=0,...,\t$ appended with $\hat\filty_i=\filty_{\kappa|\t}$ for $i-\t=\kappa=1,...,\k-1$ is a feasible filter output trajectory, the $\rho$-hard IQC~\eqref{eq:hardIQC} holds for this sequence and we can conclude
    \begin{align*}
        0 &\geq \rho^{2(\t+\k-1)}\Sigma_1 +\rho^{2(\k-1)}\Sigma_2 + \Sigma_{3}\\
        &=\norm{\begin{bmatrix}
                \e_{\k|\t} \\
                \filtx_{\k|\t}
        \end{bmatrix}}_\P^2 -\c_{\k|\t}+ \rho^{2(\t+\k-1)}\sum_{i=0}^{\t+\k-1}\rho^{-2i}\hat \filty^\top_{i} \M \hat \filty_{i}\\
         &\refeq{\eqref{eq:hardIQC}}{\geq } \ \ \norm{\begin{bmatrix}
                \e_{\k|\t} \\
                \filtx_{\k|\t}
        \end{bmatrix}}_\P^2 -\c_{\k|\t}.\\[-1cm]
    \end{align*}
\end{proof}
\begin{remark}
    Lemma~\ref{lem:error_bound} is not only relevant for considering MPC schemes but also for general $\rho$-hard IQC theory.
    In particular the error bound \eqref{eq:error_bound_ineq} and the error bound dynamics \eqref{eq:c_ol} for all $\k\geq 0$ and fixed $\t=0$ might be of interest to other settings considering the interconnection in Fig.~\ref{fig:feedback_interconnection} as for example reachability analysis with IQCs (compare \cite{Yin2020}).
    Then, these equations provide a bound on the state $\e_{\k|0}$ of system $\G$ for nonzero initial conditions $\e_{0|0}$ and nonzero external inputs $\nomy_{k|0}$ and $\d_{\k}$. 
\end{remark}

Unfortunately, we cannot use the bound~\eqref{eq:error_bound_ineq} of Lemma~\ref{lem:error_bound} for constraint tightening in an MPC scheme, since it depends on the generally unknown future disturbances $\d_{\t+\k}$ in the recursion~\eqref{eq:c_ol} and the generally unknown filter state $\filtx_{0|\t}=\filtx_{1|\t-1}$ in the recursion~\eqref{eq:c_cl}. 
Therefore, as a third step, we introduce a known upper bound $\s_{\k|\t}$ on $\c_{\k|\t}$.
\begin{theorem}[Tube dynamics]\label{thm:error_bound}
    Consider the interconnection \eqref{eq:error_interconnection}, let Ass.~\ref{ass:iqc} hold.
    Further, let $\P$ be decomposed into $\P=\begin{bmatrix}
    \P_{11} & \P_{21}^\top \\ \P_{21} & \P_{22}
    \end{bmatrix}$ with $\P_{11}\in\R^{\dimx\times \dimx}$ and define $\Ptube = \P_{11} -\P_{21}^\top \P_{22}^{-1} \P_{21}\succ0$ and $\P_\mathrm{diff}=\P_{11}-\Ptube\succeq 0$.
    Then, there exist $\Gamma\succ 0$ and $\gamma >0$ satisfying~\eqref{eq:finsler_lmi} and the following inequality holds for any sequences $\d \in \ltwoe{\dimd}$, $\nomx_{0|\cdot}\in\ltwoe{\dimx}$, $\left(\nomu_{\cdot|\t}\right)_{\t\in\mathbb{N}}$, $\nomu_{\cdot|\t} \in \ltwoe{\dimu}$, and any initial condition $\x_0\in\R^\dimx$
    \begin{align}\label{eq:error_bound_ineq_mpc}
    \norm{\e_{\k|\t} }_\Ptube^2\leq \s_{\k|\t},
    \end{align} where $\s_{\k|\t}$ is recursively defined by
    \begin{subequations}\label{eq:error_bound_mpc}
        \begin{align}\nonumber
         &\s_{0|\t}= \s_{1|\t-1}+\norm{\e_{0|\t}}_{\Ptube}^2-\norm{\e_{1|\t-1}}_{\Ptube}^2+\norm{\e_{0|\t}-\e_{1|\t-1}}_{\P_\mathrm{diff}}^2\\
        &\quad\qquad +2\norm{\e_{0|\t}-\e_{1|\t-1}}_{\P_\mathrm{diff}}\sqrt{\s_{1|\t-1}-\norm{\e_{1|\t-1}}_{\Ptube}^2} \label{eq:s_cl}\\
        &\s_{\k+1|\t} = \rho^2\s_{\k|\t} +\gamma\dmax^2+ \norm{\nomy_{\k|\t}}^2_{\Gamma},\label{eq:s_ol}
        \end{align}
    \end{subequations}
    with $\s_{0|0}=\norm{\e_{0|0}}_{\P_{11}}^2$.
\end{theorem}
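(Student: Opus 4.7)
The bound in Lemma~\ref{lem:error_bound} already gives us the state norm bound $\norm{[\e_{\k|\t}^\top\ \filtx_{\k|\t}^\top]^\top}_\P^2 \le \c_{\k|\t}$, but $\c_{\k|\t}$ is not computable online because~\eqref{eq:c_ol} involves the unknown future disturbance $\d_{\t+\k}$ and~\eqref{eq:c_cl} involves the unknown filter state $\filtx_{0|\t+1}=\filtx_{1|\t}$. The plan is to prove~\eqref{eq:error_bound_ineq_mpc}--\eqref{eq:error_bound_mpc} in two steps: (i) eliminate the dependence of $\norm{[\e^\top\ \filtx^\top]^\top}_\P^2$ on the filter state via a Schur complement, so that only $\norm{\e}_\Ptube^2$ appears on the left; (ii) prove by induction over both indices that $\c_{\k|\t}\le \s_{\k|\t}$ with the recursion~\eqref{eq:error_bound_mpc}.

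\textbf{Step (i): Schur decomposition.} Since $\P\succ 0$, the matrices $\P_{22}$ and $\Ptube = \P_{11}-\P_{21}^\top\P_{22}^{-1}\P_{21}$ are positive definite, and one has the identity
\begin{align*}
\norm{\begin{bmatrix} \e\\ \filtx\end{bmatrix}}_\P^2 = \norm{\e}_\Ptube^2 + \norm{\P_{22}^{1/2}\filtx+\P_{22}^{-1/2}\P_{21}\e}^2.
\end{align*}
Combined with Lemma~\ref{lem:error_bound}, this immediately yields $\norm{\e_{\k|\t}}_\Ptube^2\le \c_{\k|\t}$, so that~\eqref{eq:error_bound_ineq_mpc} reduces to proving $\c_{\k|\t}\le \s_{\k|\t}$.

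\textbf{Step (ii): Induction on $(\k,\t)$.} For the base case, $\c_{0|0}=\norm{[\e_{0|0}^\top\ 0]^\top}_\P^2=\norm{\e_{0|0}}_{\P_{11}}^2 = \s_{0|0}$. For the open-loop step, substitute the induction hypothesis $\c_{\k|\t}\le\s_{\k|\t}$ and the bound $\norm{\d_{\t+\k}}_\Xi^2\le\dmax^2$ into~\eqref{eq:c_ol} to obtain $\c_{\k+1|\t}\le\s_{\k+1|\t}$ directly from~\eqref{eq:s_ol}.

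\textbf{The main obstacle: the closed-loop step.} Since $\filtx_{0|\t+1}=\filtx_{1|\t}$, the increment in~\eqref{eq:c_cl} equals
\begin{align*}
\norm{\begin{bmatrix}\e_{0|\t+1}\\ \filtx_{1|\t}\end{bmatrix}}_\P^2-\norm{\begin{bmatrix}\e_{1|\t}\\ \filtx_{1|\t}\end{bmatrix}}_\P^2,
\end{align*}
which after expansion and using $\P_{11}=\Ptube+\P_\mathrm{diff}$ with $\P_\mathrm{diff}=\P_{21}^\top\P_{22}^{-1}\P_{21}$ collapses to
\begin{align*}
\norm{\e_{0|\t+1}}_\Ptube^2 - \norm{\e_{1|\t}}_\Ptube^2 + \norm{\e_{0|\t+1}-\e_{1|\t}}_{\P_\mathrm{diff}}^2 + 2(\e_{0|\t+1}-\e_{1|\t})^\top\P_{21}^\top\P_{22}^{-1/2}v,
\end{align*}
where $v:=\P_{22}^{1/2}\filtx_{1|\t}+\P_{22}^{-1/2}\P_{21}\e_{1|\t}$. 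The key point is that $\norm{v}^2$ is exactly the ``residual'' in the Schur identity applied at $\k=1,\t$, so by the induction hypothesis $\norm{v}^2 \le \c_{1|\t}-\norm{\e_{1|\t}}_\Ptube^2\le \s_{1|\t}-\norm{\e_{1|\t}}_\Ptube^2$ (which is nonnegative, making the square root well-defined). Applying Cauchy--Schwarz to the cross term, together with $\norm{\P_{22}^{-1/2}\P_{21}w}^2=\norm{w}_{\P_\mathrm{diff}}^2$, bounds it by $2\norm{\e_{0|\t+1}-\e_{1|\t}}_{\P_\mathrm{diff}}\sqrt{\s_{1|\t}-\norm{\e_{1|\t}}_\Ptube^2}$. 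Plugging this back into~\eqref{eq:c_cl} and using $\c_{1|\t}\le\s_{1|\t}$ reproduces exactly the right-hand side of~\eqref{eq:s_cl}, completing the induction. The only nontrivial work is making this last bound sharp enough, which is the motivation for the particular form of~\eqref{eq:s_cl}; everything else is routine expansion and completing the square.
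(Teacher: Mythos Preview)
Your proposal is correct and follows essentially the same route as the paper: reduce~\eqref{eq:error_bound_ineq_mpc} to $\c_{\k|\t}\le \s_{\k|\t}$ via the Schur complement, then induct, with the closed-loop step handled by the change of variables $v=\P_{22}^{1/2}\filtx_{1|\t}+\P_{22}^{-1/2}\P_{21}\e_{1|\t}$ and a sharp bound on the cross term. The paper phrases the closed-loop step as a maximization of~\eqref{eq:c_cl} over all admissible $\filtx_{1|\t}$ (linear objective on a ball after the same coordinate shift), which is precisely your Cauchy--Schwarz argument; one minor point is that the first inequality $\norm{v}^2\le \c_{1|\t}-\norm{\e_{1|\t}}_\Ptube^2$ invokes Lemma~\ref{lem:error_bound} rather than the induction hypothesis.
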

\begin{proof}
    Recall Lemma~\ref{lem:error_bound} which guarantees the existence of $\Gamma\succ 0$ and $\gamma >0$ satisfying~\eqref{eq:finsler_lmi}.
    First, we note that 
    \begin{align}\label{eq:Ptube}
    \begin{bmatrix}
    \Ptube & 0\\0&0
    \end{bmatrix}=\P - \begin{bmatrix}
    \P_{21}^\top \P_{22}^{-1} \P_{21}& \P_{21}^\top \\ \P_{21} & \P_{22}
    \end{bmatrix}\preceq \P
    \end{align}
    since the matrix $\begin{bmatrix}
    \P_{21}^\top \P_{22}^{-1} \P_{21}& \P_{21}^\top \\ \P_{21} & \P_{22}
    \end{bmatrix}$ is positive semi-definite, which can be seen by looking at its Schur complement $\P_{21}^\top \P_{22}^{-1} \P_{21}-\P_{21}^\top \P_{22}^{-1} \P_{21}=0\succeq 0$.
    In view of Lemma~\ref{lem:error_bound}, we can infer
    \begin{align*}
    \norm{\e_{\k|\t} }_\Ptube^2\leq \norm{\begin{bmatrix}
        \e_{\k|\t} \\
        \filtx_{\k|\t}
        \end{bmatrix}}_\P^2\leq \c_{\k|\t}.
    \end{align*}
    In order to prove~\eqref{eq:error_bound_ineq_mpc}, we show $\c_{\k|\t}\leq \s_{\k|\t}$ using a proof of induction. 
    The induction basis is trivial, since $\s_{0|0}=\c_{0|0}$.
    Now we have to do two induction steps, one from $\k|\t$ to $\k+1|\t$ (for $\t\geq 0$) and one from $1|\t-1$ to $0|\t$ (for $\t\geq 1$).
    Let us start with the former by using the induction hypothesis (IH) $\c_{\k|\t}\leq \s_{\k|\t}$ and $\norm{\d_{\t+\k}}_\Xi^2\leq\dmax^2$ in \eqref{eq:s_ol}:
    \begin{align*}
    \s_{\k+1|\t}\, &\refeq{\eqref{eq:s_ol}}{\geq}\, \rho^2\c_{\k|\t} +\gamma\norm{\d_{\t+\k}}_\Xi^2+ \norm{\nomy_{\k|\t}}^2_{\Gamma}\ \refeq{\eqref{eq:c_ol}}{=}\ \c_{\k+1|\t}.
    \end{align*}
    In order to take the step from $1|\t-1$ to $0|\t$, we show that $\s_{0|\t}-\s_{1|\t-1}\geq \c_{0|\t}-\c_{1|\t-1}$, which then implies $\c_{0|\t}\leq \s_{0|\t}$ by~(IH).
    As stated in~\eqref{eq:c_cl}, $\c_{0|\t}-\c_{1|\t-1}$ depends on $\filtx_{1|\t-1}$, thus we maximize it over all possible $\filtx_{1|\t-1}$, i.e., all that satisfy
    \begin{align}\label{eq:filtx_bound_ol_pred}
    \norm{\begin{bmatrix}
        \e_{1|\t-1} \\
        \filtx_{1|\t-1}
        \end{bmatrix}}_\P^2\ \ \refeq{ \eqref{eq:error_bound_ineq}, \text{IH}}{\leq}\ \ \s_{1|\t-1}.
    \end{align}
    Hence, we solve
    \begin{align}\label{eq:max_filtx}
        \c_{0|\t}-\c_{1|\t-1} \leq 
        \max_{\substack{\filtx_{1|\t-1} \\ \text{s.t. \eqref{eq:filtx_bound_ol_pred}}}}&
        \norm{\!\begin{bmatrix}
        \e_{0|\t} \\ \filtx_{1|\t-1}
        \end{bmatrix}\!}_\P^2 - \norm{\!\begin{bmatrix}
        \e_{1|\t-1} \\ \filtx_{1|\t-1}
        \end{bmatrix}\!}_\P^2.
    \end{align}
    To this end, we transform \eqref{eq:max_filtx} to an easier form.
    First, we apply the coordinate shift $\bar \filtx = \filtx_{1|\t-1}+\P_{22}^{-1}\P_{21} \e_{1|\t-1}$, which transforms the constraint~\eqref{eq:filtx_bound_ol_pred} to
    \begin{align*}
        \s_{1|\t-1}&\geq \norm{\begin{bmatrix} \e_{1|\t-1} \\ \filtx_{1|\t-1} \end{bmatrix}}_\P^2 
        \\&= \norm{\e_{1|\t-1}}_{\P_{11}}^2 + 2\e_{1|\t-1}^\top \P_{21}^\top(\bar \filtx-\P_{22}^{-1}\P_{21} \e_{1|\t-1}) \\&\qquad + \norm{\bar \filtx-\P_{22}^{-1}\P_{21} \e_{1|\t-1}}_{\P_{22}}^2\\
        &= \norm{\e_{1|\t-1}}_{\Ptube}^2 + \norm{\bar \filtx}_{\P_{22}}^2
    \end{align*}
    and the objective of~\eqref{eq:max_filtx} to
    \begin{align*}
        &\norm{\!\begin{bmatrix}
                \e_{0|\t} \\ \filtx_{1|\t-1}
            \end{bmatrix}\!}_\P^2 - \norm{\!\begin{bmatrix}
                \e_{1|\t-1} \\ \filtx_{1|\t-1}
            \end{bmatrix}\!}_\P^2=\norm{\e_{0|\t}}_{\P_{11}}^2-\norm{\e_{1|\t-1}}_{\P_{11}}^2\\&\qquad\qquad\qquad+2(\bar \filtx - \P_{22}^{-1}\P_{21} \e_{1|\t-1})^\top \P_{21} (\e_{0|\t}-\e_{1|\t-1})\\
        &=\norm{\e_{0|\t}}_{\P_{11}}^2-\norm{\e_{1|\t-1}}_{\P_{11}}^2-2\e_{1|\t-1}^\top\P_\mathrm{diff} (\e_{0|\t}-\e_{1|\t-1}) \\
        &\quad+2\bar \filtx^\top \P_{21} (\e_{0|\t}-\e_{1|\t-1})\\
        &=\norm{\e_{0|\t}}_{\Ptube}^2-\norm{\e_{1|\t-1}}_{\Ptube}^2+ \norm{\e_{0|\t}-\e_{1|\t-1}}_{\P_\mathrm{diff}}^2 \\&\quad+2\bar \filtx^\top \P_{21} (\e_{0|\t}-\e_{1|\t-1}).
    \end{align*}
    Second, we apply the linear transformation $\tilde \filtx = \P_{22}^{\sfrac{1}{2}}\bar \filtx$ and obtain that the optimization problem~\eqref{eq:max_filtx} is equivalent to
    \begin{align}\label{eq:max_filtx_3}
    \begin{split}
    \max_{\tilde \filtx}&\norm{\e_{0|\t}}_{\Ptube}^2-\norm{\e_{1|\t-1}}_{\Ptube}^2+\norm{\e_{0|\t}-\e_{1|\t-1}}_{\P_\mathrm{diff}}^2\\&+2\tilde \filtx^\top \P_{22}^{-\sfrac{1}{2}\top} \P_{21} (\e_{0|\t}-\e_{1|\t-1}) \\ \text{s.t.\ }&\norm{\tilde \filtx}^2 \leq \s_{1|\t-1}-\norm{\e_{1|\t-1}}_{\Ptube}^2.
    \end{split}
    \end{align}
    This optimization problem has an affine objective and the constraint set is a scaled unit ball.
    Thus the analytical maximum is attained at $\norm{\tilde \filtx}^2 = \s_{1|\t-1}-\norm{\e_{1|\t-1}}_{\Ptube}^2$ with $\tilde \filtx$ pointing in the direction of $\P_{22}^{-\sfrac{1}{2}\top} \P_{21} (\e_{0|\t}-\e_{1|\t-1})$, i.e., the maximum of  \eqref{eq:max_filtx} is
    \begin{align*}
    &\norm{\e_{0|\t}}_{\Ptube}^2-\norm{\e_{1|\t-1}}_{\Ptube}^2+\norm{\e_{0|\t}-\e_{1|\t-1}}_{\P_\mathrm{diff}}^2\\&\quad +2\norm{\e_{0|\t}-\e_{1|\t-1}}_{\P_\mathrm{diff}}\sqrt{\s_{1|\t-1}-\norm{\e_{1|\t-1}}_{\Ptube}^2}.
    \end{align*}
    By definition~\eqref{eq:s_cl}, this is $\s_{0|\t} - \s_{1|\t-1}$.
    To conclude, we have shown that $\s_{0|\t} - \s_{1|\t-1}\geq \c_{0|\t} - \c_{1|\t-1}$ which implies by (IH) $\s_{0|\t} \geq \c_{0|\t}$ and thus completes the proof by induction.
\end{proof}

\section{Proposed MPC scheme}\label{sec:mpc}
In this section, we propose an MPC scheme that handles dynamic uncertainties by using the error bounding system from Thm.~\ref{thm:error_bound} as tube dynamics, which ensures that the tube confines all possible trajectories.
In order to ensure constraint satisfaction of the true but unknown system, we have to choose the nominal inputs $\nomu_{\cdot|\t}$ such that the whole tube around the nominal trajectory is feasible.
\begin{lemma}[Constraint tightening]\label{lem:constr_satisf}
    Consider the interconnection \eqref{eq:nom}--\eqref{eq:error_interconnection} and the tube dynamics \eqref{eq:error_bound_mpc}. Let Ass.~\ref{ass:iqc} hold. 
    Then, the constraints $\constrMat \begin{bmatrix}
    \x_{\k|\t} \\ \u_{\k|\t}
    \end{bmatrix}\leq \constrVec$ hold whenever
    \begin{align}\label{eq:mpc_constr}
    \constrMat \begin{bmatrix}
    \nomx_{\k|\t} \\ \nomu_{\k|\t}
    \end{bmatrix}\leq \constrVec-\sqrt{\s_{\k|\t}}\constrTight
    \end{align}
    holds, where  $\constrTight_i= \norm{\Ptube^{-\sfrac{1}{2}} \begin{bmatrix}I&\K^\top \end{bmatrix}\constrMat_i^\top}$ for $i=1,\dots, \dimconstrVec$.
\end{lemma}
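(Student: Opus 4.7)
The plan is to propagate the tube bound from Theorem~\ref{thm:error_bound} into the constraint rows one at a time, using a weighted Cauchy--Schwarz inequality with $\Ptube$ as the weighting matrix.

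First I would rewrite the true constraint quantity in terms of the nominal and the error. Using $\x_{\k|\t}=\nomx_{\k|\t}+\e_{\k|\t}$ together with the feedback law \eqref{eq:input}, for each row index $i\in\{1,\dots,\dimconstrVec\}$,
\begin{align*}
\constrMat_i\begin{bmatrix}\x_{\k|\t}\\ \u_{\k|\t}\end{bmatrix}
&= \constrMat_i\begin{bmatrix}\nomx_{\k|\t}\\ \nomu_{\k|\t}\end{bmatrix}
+ \constrMat_i\begin{bmatrix} I\\ \K\end{bmatrix}\e_{\k|\t}.
\end{align*}
So the whole task is to show that the second summand is bounded by $\sqrt{\s_{\k|\t}}\,\constrTight_i$.

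Next I would apply Cauchy--Schwarz in the $\Ptube$-inner product. Setting $a_i:=\begin{bmatrix}I & \K^\top\end{bmatrix}\constrMat_i^\top$, we have
\begin{align*}
\constrMat_i\begin{bmatrix}I\\ \K\end{bmatrix}\e_{\k|\t}
= a_i^\top \e_{\k|\t}
\le \|a_i\|_{\Ptube^{-1}}\,\|\e_{\k|\t}\|_{\Ptube}
= \constrTight_i\,\|\e_{\k|\t}\|_{\Ptube},
\end{align*}
which is valid because $\Ptube\succ 0$ by Thm.~\ref{thm:error_bound}. Invoking the tube bound \eqref{eq:error_bound_ineq_mpc} from Thm.~\ref{thm:error_bound} then yields $\|\e_{\k|\t}\|_{\Ptube}\le\sqrt{\s_{\k|\t}}$, so $\constrMat_i[\x_{\k|\t}^\top,\u_{\k|\t}^\top]^\top \le \constrMat_i[\nomx_{\k|\t}^\top,\nomu_{\k|\t}^\top]^\top+\constrTight_i\sqrt{\s_{\k|\t}}$.

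Finally, combining with the tightened nominal constraint~\eqref{eq:mpc_constr} gives $\constrMat_i[\x_{\k|\t}^\top,\u_{\k|\t}^\top]^\top\le \constrVec_i$, and since this holds for every row $i$ the full constraint follows. There is no real obstacle here beyond bookkeeping; the only care required is to invoke Thm.~\ref{thm:error_bound} to guarantee that $\Ptube$ is positive definite (so that the weighted Cauchy--Schwarz step is legal) and to confirm that the resulting $\constrTight_i$ matches the definition in the statement, both of which are immediate.
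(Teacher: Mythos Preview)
Your proof is correct and essentially identical to the paper's: both decompose $\constrMat_i[\x_{\k|\t}^\top,\u_{\k|\t}^\top]^\top$ into the nominal part plus $a_i^\top\e_{\k|\t}$ and then bound the latter by $\constrTight_i\sqrt{\s_{\k|\t}}$ via the ellipsoidal error bound~\eqref{eq:error_bound_ineq_mpc}. The only cosmetic difference is that the paper phrases the bounding step as an explicit maximization over $\|\bar\e\|_\Ptube^2\le\s_{\k|\t}$ with the change of variables $\tilde\e=\Ptube^{1/2}\bar\e$, whereas you invoke the weighted Cauchy--Schwarz inequality directly; these are the same computation.
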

\begin{proof}
    Since Ass.~\ref{ass:iqc} holds, the error bound~\eqref{eq:error_bound_ineq_mpc} from Thm.~\ref{thm:error_bound} is valid.
    This implies 
    \begin{align*}
    \constrMat_i \begin{bmatrix}
    \x_{\k|\t} \\ \u_{\k|\t}
    \end{bmatrix} &= \constrMat_i \begin{bmatrix}
    \nomx_{\k|\t} + \e_{\k|\t}\\ \nomu_{\k|\t}+\K \e_{\k|\t}
    \end{bmatrix}\leq \max_{\norm{\bar \e}^2_\Ptube \leq \s_{\k|\t}} \constrMat_i \begin{bmatrix}
    \nomx_{\k|\t} +\bar \e\\ \nomu_{\k|\t}+\K\bar \e
    \end{bmatrix}
    \end{align*}
    where we can solve the maximization problem with the transformation $\tilde \e = \Ptube^{\sfrac 1 2} \bar \e$
    \begin{align*}
    \constrMat_i \begin{bmatrix}
    \x_{\k|\t} \\ \u_{\k|\t}
    \end{bmatrix}&\leq\max_{\norm{\tilde \e}^2 \leq \s_{\k|\t}} \constrMat_i \begin{bmatrix}
    \nomx_{\k|\t} +\Ptube^{-\sfrac 1 2}\tilde\e\\ \nomu_{\k|\t}+\K\Ptube^{-\sfrac 1 2}\tilde \e
    \end{bmatrix}\\
    &=\constrMat_i \begin{bmatrix}
    \nomx_{\k|\t}\\ \nomu_{\k|\t}
    \end{bmatrix} + \norm{\constrMat_i \begin{bmatrix}I\\\K\end{bmatrix}\Ptube^{-\sfrac{1}{2}}} \sqrt{\s_{\k|\t}}.
    \end{align*}
    Hence, we have shown $\constrMat \begin{bmatrix}
    \x_{\k|\t} \\ \u_{\k|\t}
    \end{bmatrix}\leq\constrMat \begin{bmatrix}
    \nomx_{\k|\t} \\ \nomu_{\k|\t}
    \end{bmatrix} + \constrTight \sqrt{\s_{\k|\t}}\leq \constrVec$.
\end{proof}

At each time step $\t\geq 0$ we measure the current state $\x_\t=\x_{0|\t}=\x_{1|\t-1}$ and solve the following optimization problem based on this measurement and the previously predicted nominal state $\nomx_{1|\t-1}$ and tube size $\s_{1|\t-1}$
\begin{subequations}\label{eq:mpc}
\begin{align}\label{eq:mpc_cost}
\min_{\nomu_{\cdot|\t},\nomx_{0|\t}}\,&\sum_{\k=0}^{\T-1}\left(\norm{\nomx_{\k|\t}}_\Q^2+\norm{\nomu_{\k|\t}}_\Rcost^2\right) + \norm{\nomx_{\T|\t}}^2_\S\\
\text{s.t.\ \,}& \text{initial constraint \eqref{eq:s_cl}} \nonumber\\
& \text{nominal dynamics } \eqref{eq:nom}\text{ for } \k =0,\dots,\T-1\nonumber\\
& \text{tube dynamics } \eqref{eq:s_ol}\text{ for } \k =0,\dots,\T-1 \nonumber \\
& \text{tightened constraints } \eqref{eq:mpc_constr}\text{ for } \k =0,\dots,\T-1\nonumber\\ \label{eq:mpc_term}
&\text{terminal constraint }\begin{bmatrix}\nomx_{\T|\t}^\top  & \s_{\T|\t} \end{bmatrix}^\top \in \Omega
\end{align}
\end{subequations}
where $\Omega\subseteq \R^{\dimx+1}$ is the terminal constraint set and $\Q, \S\in\R^{\dimx\times\dimx}$, and $\Rcost\in\R^{\dimu\times\dimu}$ are positive definite weighting matrices. 
Hence, the initial nominal state $\nomx_{0|\t}$ is a decision variable.
We denote the minimizer of problem~\eqref{eq:mpc} by $\nomu_{\cdot|\t}^\star$ and $\nomx_{0|\t}^\star$, with the corresponding trajectories $\nomx_{\cdot|\t}^\star$ and $\s_{\cdot|\t}^\star$.
Then, as is standard in tube-based MPC and as defined in \eqref{eq:input}, the control input is given by $\u_\t=\u_{0|\t}^\star=\K(\x_\t-\nomx_{0|\t}^\star)+\nomu_{0|\t}^\star$.
Note that this implies for the closed loop system that $\x_\t=\x_{0|\t}^\star$, $\filtx_\t=\filtx_{0|\t}^\star$, $\y_\t=\y_{0|\t}^\star$, $\filty_\t=\filty_{0|\t}^\star$, and $\w_\t=\w_{0|\t}^\star$, where the stars denote that these signals result from $\nomu_{\cdot|\t}=\nomu_{\cdot|\t}^\star$ and $\nomx_{0|\t}=\nomx_{0|\t}^\star$. 

Note that the stage cost only acts on the nominal state and input, comparable to standard tube-based MPC designs (e.g., \cite{Chisci2001}, \cite{Mayne2005}, \cite{Rakovic2012}, \cite{Loehning2014}, etc.). Hence, the asymptotic behavior is dominated by $\u=\K\x$. Even further, and similar to \cite{Chisci2001}, \cite{Rakovic2012}, and \cite{Loehning2014}, the MPC optimizer chooses $\nomx_{0|\t}=0$ and $\nomu_{\cdot|\t}=0$ if the controller $\K$ is guaranteed to steer the true system towards $0$ without violating the constraints.
In this sense, the MPC is restrained and interferes only if robust constraint satisfaction of the system controlled by $\K$ cannot be guaranteed.
To ensure that problem~\eqref{eq:mpc} is recursively feasible, we need to design suitable terminal conditions.
\begin{assumption}[Terminal conditions]\label{ass:term_cond}
    The matrices $\Q$, $\Rcost$, and $\S$ are positive definite. The terminal set $\Omega$ contains the origin in its interior and there exist $\Kloc\in\R^{\dimx\times\dimu}$ such that for all $\begin{bmatrix}\nomx^\top & \s \end{bmatrix}^\top\in\Omega$ we have
    \begin{enumerate}
        \item[1)] positive invariance\vspace{-0.2cm}
    \end{enumerate}
        \begin{align*}
        \begin{bmatrix}(\A+\Bu\Kloc)\nomx \\ \rho^2 \s + \|(\C+\Du\Kloc)\nomx\|^2_\Gamma+\gamma\dmax^2 \end{bmatrix} \in \Omega
        \end{align*}\vspace{-0.3cm}
        \begin{enumerate}
        \item[2)] constraint satisfaction: $\displaystyle
        \constrMat\begin{bmatrix}
        \nomx \\ \Kloc\nomx
        \end{bmatrix} \leq \constrVec-\sqrt\s\constrTight$
        \item[3)] terminal cost decrease\vspace{-0.2cm}
        \begin{align*}
        \norm{(\A+\Bu\Kloc)\nomx}_\S^2 - \norm{\nomx}_\S^2 \leq - \norm{\nomx}_{\Q}^2-\norm{\Kloc\nomx}_\Rcost^2.
        \end{align*}
    \end{enumerate}
\end{assumption}
In Sec.~\ref{sec:term_cond} below, we will discuss how to construct $\Kloc$, $\S$ and $\Omega$ that satisfy Ass.~\ref{ass:term_cond}.
Now, we have all ingredients to show that the MPC controller indeed stabilizes the system and guarantees robust constraint satisfaction.
\begin{theorem}[Stability and Recursive Feasibility]\label{thm:stab}
    Let Ass.~\ref{ass:iqc} and~\ref{ass:term_cond} hold with $\rho < 1$. 
    Let $\filtx_0=0$ and assume that the optimization problem~\eqref{eq:mpc} is feasible at time $\t=0$.
    Then~\eqref{eq:mpc} is feasible for all $\t\geq 0$, the constraints~\eqref{eq:constr} are satisfied for all times $\t\geq 0$ and the closed loop satisfies the following ISS\footnote{Eq.~\eqref{eq:iss} is an integrated form of the classical notion for ISS, which was shown to be equivalent in~\cite{Sontag1998} for continuous-time, but the proof follows similar steps in discrete-time.} bound: there are a constant $\constd>0$ and a class~$\mathcal K$ function\footnote{$\alpha:\R_{\geq 0} \to \R_{\geq 0}$ continuous, monotonically increasing and $\alpha (0)=0$.} $\alpha$ such that for all $N\geq 0$ it holds
    \begin{align}\label{eq:iss}
    \sum_{\t=0}^{N} \norm{\x_\t}^2
    &\leq \alpha (\norm{x_0}) + \constd \sum_{\t=0}^{N-1}\norm{\d_\t}^2. 
    \end{align}
\end{theorem}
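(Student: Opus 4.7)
The proof splits naturally into recursive feasibility, constraint satisfaction, and the ISS estimate. For recursive feasibility, I build the standard shifted candidate at time $\t+1$: $\nomxpred{0}{\t+1}=\nomxsol{1}{\t}$, $\nomupred{\k}{\t+1}=\nomusol{\k+1}{\t}$ for $\k=0,\dots,\T-2$, and $\nomupred{\T-1}{\t+1}=\Kloc\nomxsol{\T}{\t}$. Since $\Delta$ is causal and $\u_\t=\u_{0|\t}^\star$, the closed loop satisfies $\x_{\t+1}=\x_{1|\t}^\star$, so $\epred{0}{\t+1}=\e_{1|\t}^\star$ and the jump~\eqref{eq:s_cl} collapses to $\cpred{0}{\t+1}=\csol{1}{\t}$; the open-loop recursion~\eqref{eq:s_ol} then yields $\cpred{\k}{\t+1}=\csol{\k+1}{\t}$ for all remaining $\k$. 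The tightened constraints~\eqref{eq:mpc_constr} carry over from optimality at time $\t$ for $\k=0,\dots,\T-2$ and from Ass.~\ref{ass:term_cond}.2 for $\k=\T-1$, while Ass.~\ref{ass:term_cond}.1 places the new terminal pair in $\Omega$. Induction gives feasibility for all $\t\geq 0$, and satisfaction of~\eqref{eq:constr} then follows by applying Lemma~\ref{lem:constr_satisf} at each time to the optimal MPC solution with $\x_\t=\x_{0|\t}^\star$ and $\u_\t=\u_{0|\t}^\star$.

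For the ISS bound I use the optimal cost $\Jmin(\x_\t)$ as Lyapunov function. Evaluating $\J$ on the candidate and invoking the terminal-cost decrease Ass.~\ref{ass:term_cond}.3 yields the standard one-step inequality
\begin{align*}
\Jmin(\x_{\t+1})-\Jmin(\x_\t) \leq -\|\nomxsol{0}{\t}\|_\Q^2 - \|\nomusol{0}{\t}\|_\Rcost^2.
\end{align*}
Telescoping bounds $\sum_{\t=0}^{N-1}\|\nomxsol{0}{\t}\|^2$ by $c_1\Jmin(\x_0)$, and a standard continuity argument on the feasible set (which contains a neighborhood of the origin because $\Omega$ does) gives $\Jmin(\x_0)\leq \alpha_1(\|\x_0\|)$ for some class~$\mathcal K$ function $\alpha_1$. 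Decomposing $\|\x_\t\|^2 \leq 2\|\nomxsol{0}{\t}\|^2 + 2\lambda_{\max}(\Ptube^{-1})\,\csol{0}{\t}$ through the tube bound~\eqref{eq:error_bound_ineq_mpc}, the remaining task is to control $\sum_\t \csol{0}{\t}$ in terms of the realized disturbance.

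This last step is the main obstacle, because re-optimization of $\nomxsol{0}{\t+1}$ makes the closed-loop evolution of $\csol{0}{\t}$ non-trivial via the jump~\eqref{eq:s_cl}. My plan is to work with the augmented Lyapunov function $V_\t=\Jmin(\x_\t)+\beta\csol{0}{\t}$ for a sufficiently small $\beta>0$ and exploit feasibility of the shifted candidate, which produces $\csol{0}{\t+1}\leq\csol{1}{\t}=\rho^2\csol{0}{\t}+\gamma\dmax^2+\|\nomy_{0|\t}^\star\|_\Gamma^2$; obtaining this monotonicity rigorously is the key technical step and will require either adding $\csol{0}{\t+1}$ to the MPC cost (a harmless reformulation that preserves all previous arguments) or a direct estimate on the jump~\eqref{eq:s_cl}. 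Once in place, $\|\nomy_{0|\t}^\star\|_\Gamma^2$ is dominated by the quadratic cost via~\eqref{eq:nomy} and is absorbed into the $\Jmin$-decrease for small $\beta$, yielding $V_{\t+1}-V_\t \leq -c_2\|\x_\t\|^2 + \beta\gamma\dmax^2$. The conservative constant $\gamma\dmax^2$ is sharpened to $\gamma\|\d_\t\|_\Xi^2$ by revisiting the dissipation inequality in the proof of Lemma~\ref{lem:error_bound}: the estimate $\|\d_\t\|_\Xi^2\leq\dmax^2$ was only needed to pass from the exact $\c$ to the predictive over-approximation $\s$, but along the realized closed-loop trajectory the true value can be retained. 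Summing the $V_\t$-decrease from $\t=0$ to $N$ and using $\|\d_\t\|_\Xi^2\leq\lambda_{\max}(\Xi)\|\d_\t\|^2$ then produces~\eqref{eq:iss} with an appropriate class~$\mathcal K$ function $\alpha$ (combining $\alpha_1$ with $V_0$) and a constant $\constd>0$.
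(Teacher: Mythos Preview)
Your recursive-feasibility and constraint-satisfaction arguments are correct and match the paper. The gap is in the ISS part, precisely at the step you flag yourself: the inequality $\csol{0}{\t+1}\leq\csol{1}{\t}$ does \emph{not} follow from feasibility of the shifted candidate. The candidate gives $\cpred{0}{\t+1}=\csol{1}{\t}$, but the optimizer at time $\t+1$ is free to choose a different $\nomxsol{0}{\t+1}$ that \emph{increases} $\s_{0|\t+1}$ through the jump~\eqref{eq:s_cl} whenever this lowers the nominal cost~\eqref{eq:mpc_cost}, which does not penalize $\s$ at all. Your proposed fix of adding $\s_{0|\t}$ to the objective changes the MPC scheme and therefore proves a different theorem; the alternative ``direct estimate on the jump'' is not supplied, and~\eqref{eq:s_cl} can in fact grow by terms of order $\sqrt{\csol{1}{\t}}$. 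Consequently your augmented Lyapunov function $V_\t=\Jmin(\x_\t)+\beta\,\csol{0}{\t}$ cannot be shown to decrease along the closed loop, and the bound on $\sum_\t\csol{0}{\t}$ remains open.

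The paper avoids this obstacle by \emph{not} routing the ISS estimate through the tube scaling $\s$. Instead it applies Finsler's Lemma once more to~\eqref{eq:lmi} to obtain a dissipation inequality for the closed-loop state $(\x_\t,\filtx_\t)$ with exogenous inputs $\d_\t$ and $\nomu_\t-\K\nomx_\t$; summing and invoking the $\rho$-hard IQC yields directly
\[
\sum_{\t=0}^{N}\|\x_\t\|_\Ptube^2 \;\leq\; \tfrac{1}{1-\rho^2}\|\x_0\|_{\P_{11}}^2 + \tfrac{\bar\gamma}{1-\rho^2}\sum_{\t=0}^{N-1}\bigl(\|\d_\t\|^2+\|\nomu_\t-\K\nomx_\t\|^2\bigr).
\]
The MPC cost decrease you also derive then bounds $\sum_\t\|\nomu_\t-\K\nomx_\t\|^2$ by a class~$\mathcal K$ function of $\|\x_0\|$ (using $\Q,\Rcost\succ0$), and this closes the argument without ever touching $\csol{0}{\t}$. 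The key idea you are missing is to treat the MPC action $\nomu_\t-\K\nomx_\t$ as a second disturbance channel in the IQC analysis of the true state $\x_\t$, rather than trying to control a tube parameter that the optimizer does not penalize.
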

\begin{proof}
    The proof is divided into three parts.
    
    \emph{Recursive feasibility.}
    We show recursive feasibility by induction. Therefore, assume~\eqref{eq:mpc} is feasible at time $\t-1$, then define the following candidate solution
    \begin{align*}
    \bar \nomu_{\k|\t} &= \begin{cases} \nomusol{\k+1}{\t-1} & \text{for } \k=0,...,\T-2 \\ \Kloc \nomxsol{\T}{\t-1} & \text{for } \k=\T-1
    \end{cases}\\
    \nomxpred{\k}{\t} &= \begin{cases} \nomxsol{\k+1}{\t-1} & \text{for } \k=0,...,\T-1 \\ (\A+\Bu\Kloc) \nomxsol{\T}{\t-1} & \text{for } \k=\T
    \end{cases}\\
    \cpred{\k}{\t} &= \begin{cases} \csol{\k+1}{\t-1} &\hspace{-1cm} \text{for } \k= 0,...,\T-1 \\ \rho^2 \csol{\T}{\t-1}+\|\rpred{\T-1}{\t}\|^2_\Gamma+\gamma\dmax^2 & \text{for } \k=\T,
    \end{cases}
    \end{align*}
    where $\rpred{\T-1}{\t}=(\C+\Du\Kloc)\nomxsol{\T}{\t-1}$. 
    This candidate solution follows the nominal dynamics~\eqref{eq:nom} and the tube dynamics~\eqref{eq:error_bound_mpc}.
    The induction hypothesis yields~\eqref{eq:mpc_constr} for $\k\in [0,\T-2]$ as well as $\begin{bmatrix}\nomx_{\T|\t-1}^{\star \top} & \s_{\T|\t-1}^{\star}\end{bmatrix}^\top\in\Omega$.
    With 2) in Ass.~\ref{ass:term_cond} it follows~\eqref{eq:mpc_constr} for $\k=\T-1$ and with~1) in Ass.~\ref{ass:term_cond} it follows~\eqref{eq:mpc_term}.
    Hence, this candidate solution is feasible.
    
    \emph{Robust constraint satisfaction.} Follows immediately from Lemma~\ref{lem:constr_satisf}, recursive feasibility, and $\x_{\t}=\x_{0|\t}^\star$, $\u_\t = \u_{0|\t}^\star$.
    
    \emph{Input-to-state stability.}
    To derive the ISS bound~\eqref{eq:iss}, we note that the LMI \eqref{eq:lmi} from Ass.~\ref{ass:iqc} implies
    \begin{align*}
    \newcommand{\colspace}{\!\!\!}\colspace
    \begin{bmatrix}
    \\\\ \symLMI \\\\\\
    \end{bmatrix}^\top\!\! \begin{bmatrix}
    -\rho^2\P & \colspace 0 & \colspace 0 & \colspace 0 \\
    0 & \colspace\P & \colspace 0 & \colspace 0\\
    0 & \colspace 0 & \colspace\M & \colspace 0 \\ 0& \colspace 0&\colspace 0&\colspace\!-\bar \gamma I
    \end{bmatrix}\begin{bmatrix}
    I & \colspace 0 & 0\\
    \allA & \colspace\allB & \colspace\begin{bmatrix} \Bd & \colspace \Bu \\ \filtBin\Dd &\colspace \filtBin\Du \end{bmatrix}\\ \allC &\colspace \allD & \colspace\begin{bmatrix}    \filtDin\Dd & \colspace\filtDin\Du \end{bmatrix} \\ 0&\colspace 0&I
    \end{bmatrix}\prec 0
    \end{align*} 
    for some $\bar \gamma>0$.
    This can be seen by applying Finsler's Lemma analogous to the proof of Lemma~\ref{lem:error_bound}.
    Let us recap that $\u_\t=\u_{0|\t}^\star$ and thus $\x_\t=\x_{0|\t}^\star$, $\filtx_\t=\filtx_{0|\t}^\star$,  $\y_\t=\y_{0|\t}^\star$,  $\filty_\t=\filty_{0|\t}^\star$,  $\w_\t=\w_{0|\t}^\star$, and similarly denote the closed-loop nominal input $\nomu_\t=\nomu_{0|\t}^\star$ and state $\nomx_\t=\nomx_{0|\t}^\star$.
    Then, by multiplying $\begin{bmatrix}
    \x_{\tau}^\top & \filtx_{\tau}^\top & \w_{\tau}^\top & \d_{\tau}^\top & (\nomu_{\tau}-\K\nomx_{\tau})^\top
    \end{bmatrix}$ from left to the above LMI and its transpose from right, we obtain with
    \newcommand{\colspace}{\!\!\!}
    \begin{align*}
    &\begin{bmatrix}
    \allA & \allB & \colspace\begin{bmatrix} \Bd & \colspace \Bu \\ \filtBin\Dd &\colspace \filtBin\Du \end{bmatrix}\\ \allC &\colspace \allD & \colspace\begin{bmatrix}    \filtDin\Dd & \colspace\filtDin\Du \end{bmatrix} 
    \end{bmatrix}
    \begin{bmatrix}\x_{\tau} \\ \filtx_{\tau} \\ \w_{\tau} \\ \d_{\tau} \\ \nomu_{\tau}-\K\nomx_{\tau}    \end{bmatrix} 
    \\
    &\quad\refeq{(\ref{eq:y}, \ref{eq:input},  \ref{eq:aug_dyn})}{=}\quad \begin{bmatrix}
    \A\x_{\tau}+\Bw\w_{\tau} +\Bd\d_{\tau}+\Bu\u_{\tau} \\ 
    \filtA \filtx_{\tau} + \filtBin \y_{\tau} + \filtBout  \w_{\tau} \\ 
    \filtC \filtx_{\tau} + \filtDin \y_{\tau} + \filtDout  \w_{\tau} 
    \end{bmatrix}\ \ \refeq{(\ref{eq:sys_x}, \ref{eq:filt_ss})}{=}\ \     \begin{bmatrix}\x_{\tau+1} \\ \filtx_{\tau+1} \\ \filty_{\tau}
    \end{bmatrix}
    \end{align*}
     the dissipation inequality
    \begin{align*}
    \begin{split}
    &\norm{\begin{bmatrix}
        \x_{\tau+1} \\
        \filtx_{\tau+1}
        \end{bmatrix}}_\P^2 - \rho^2 
    \norm{\begin{bmatrix}
        \x_{\tau} \\
        \filtx_{\tau}
        \end{bmatrix}}_\P^2 
    \\ & \qquad + \filty^\top_{\tau} \M \filty_{\tau}-\bar \gamma \norm{ \d_{\tau}}^2- \bar \gamma\norm{\nomu_{\tau}-\K\nomx_{\tau}}^2\leq 0.
    \end{split}
    \end{align*}
    Multiplying this inequality with $\rho^{2(\t-\tau-1)}$, summing it up from $\tau =0$ to $\tau=\t-1$, and using \eqref{eq:hardIQC} yields
    \begin{align*}
    \norm{\x_{\t}}_\Ptube^2\refeq{\eqref{eq:Ptube}}{\leq} &\,\norm{\begin{bmatrix}
        \x_{\t} \\
        \filtx_{\t}
        \end{bmatrix}}_\P^2\leq \rho^{2\t}\norm{
        \x_{0}}_{\P_{11}}^2\\&+ \sum_{\tau=0}^{\t-1}\rho^{2(\t-\tau-1)}\bar \gamma(\norm{ \d_{\tau}}^2+ \norm{\nomu_{\tau}-\K\nomx_{\tau}}^2).
    \end{align*}
    Summing this inequality once more from $\t=0$ to $\t=N$ and using the geometric series $\sum_{\t=0}^{N} \rho^{2\t} \leq \frac 1 {1-\rho^2}$ yields
    \begin{align}\nonumber
    \sum_{\t=0}^N\norm{\x_{\t}}_\Ptube^2\leq &\frac{1}{1-\rho^2 }\norm{
        \x_{0}}_{\P_{11}}^2\\&+  \sum_{\t=0}^{N-1}\frac{\bar \gamma}{1-\rho^2 }(\norm{ \d_{\t}}^2+ \norm{\nomu_{\t}-\K\nomx_{\t}}^2).\label{eq:iss1}
    \end{align}
    To proceed, we need to bound the sum over $\nomx_\t$ and $\nomu_\t$. 
    Therefore, let us introduce the notation $\J_\T(\nomx_{\cdot|\t},\nomu_{\cdot|\t})$ for the objective function in problem~\eqref{eq:mpc} and use the suboptimality of the candidate solution to obtain
    \begin{align*}
    &\J_\T(\nomx^\star_{\cdot|\t},\nomu^\star_{\cdot|\t})-\J_\T(\nomx^\star_{\cdot|\t-1},\nomu^\star_{\cdot|\t-1})\\
    &\leq  \J_\T(\nomxpred{\cdot}{\t},\nomupred{\cdot}{\t})-\J_\T(\nomx^\star_{\cdot|\t-1},\nomu^\star_{\cdot|\t-1})\\&= \|\nomxsol{\T}{\t-1}\|_\Q^2 + \|\Kloc\nomxsol{\T}{\t-1}\|_\Rcost - \|\nomxsol{0}{\t-1}\|_\Q^2 - \|\nomusol{0}{\t-1}\|_\Rcost^2 \\ &\quad + \|(\A+\Bu\Kloc)\nomxsol{\T}{\t-1}\|_\S^2 -\|\nomxsol{\T}{\t-1}\|_\S^2,
  \end{align*}
   and further with the third property of Ass.~\ref{ass:term_cond} it follows
   \begin{align*}
    &\J_\T(\nomx^\star_{\cdot|\t},\nomu^\star_{\cdot|\t})-\J_\T(\nomx^\star_{\cdot|\t-1},\nomu^\star_{\cdot|\t-1})
    \\ &\qquad \refeq{\text{Ass.~\ref{ass:term_cond}}}{\leq}- \|\nomxsol{0}{\t-1}\|_\Q^2 - \|\nomusol{0}{\t-1}\|_\Rcost^2 = - \|\nomx_{\t-1}\|_\Q^2 - \|\nomu_{\t-1}\|_\Rcost^2.
    \end{align*}
    If we sum this inequality from $\t=1$ to $\t=N$ we find that 
    \begin{align} \sum_{\t=0}^{N-1}\left(\big\|\nomx_{\t}\big\|_\Q^2+\big\|\nomu_{\t}\big\|_\Rcost^2\right)&\leq\J_\T(\nomx^\star_{\cdot|0},\nomu^\star_{\cdot|0})- \J_\T(\nomx^\star_{\cdot|N},\nomu^\star_{\cdot|N})\nonumber\\
        &\leq\J_\T(\nomx^\star_{\cdot|0},\nomu^\star_{\cdot|0})\leq \alpha_0 (\norm{\x_0}),\label{eq:iss2}
    \end{align}
    where the second inequality holds due to non-negativity of $\J_\T(\nomx^\star_{\cdot|N},\nomu^\star_{\cdot|N})$ for all $N$ and the third inequality is discussed in the following. 
    For $(\x_{0},0)\in\Omega$ we know that problem \eqref{eq:mpc} is feasible, since we can choose $\bar \nomx_{0|0}=\x_0$, which implies $\bar \s_{0|0}=\|\bar \e_{0|0}\|_{\P_{11}}^2=0$, and the local controller $\bar \nomu_{\k|0}=\Kloc \bar \nomx_{\k|0}$, which is feasible due to Ass.~\ref{ass:term_cond}.
    Thus, we conclude $\J_\T(\nomx^\star_{\cdot|0},\nomu^\star_{\cdot|0})\leq\J_\T(\bar \nomx_{\cdot|0},\bar \nomu_{\cdot|0})\leq \norm{\bar \nomx_{0|0}}_\S^2=\norm{\x_0}_\S^2$, where the second inequality follows from repeatedly applying condition 3) of Ass.~\ref{ass:term_cond}.
    Since the origin is in the interior of $\Omega$ this bound holds in a neighborhood of $\x_0=0$.
    We can extend such a bound by a class $\mathcal K$ function $\alpha_0$ over the whole feasible set, i.e., $\J_\T(\nomx^\star_{\cdot|0},\nomu^\star_{\cdot|0})\leq \alpha_0(\norm{\x_0})$ (see \cite[Prop~B.25]{Rawlings2017}) due to local boundedness of $\J_\T(\nomx^\star_{\cdot|0},\nomu^\star_{\cdot|0})$ for feasible $\x_0$.
    Hence, it follows~\eqref{eq:iss2}.
    The positive definiteness of $\Q$ and $\Rcost$ ensures existence of a constant $\constproof_2>0$ such that
    \begin{align}\label{eq:iss3}
    \constproof_2 \sum_{\t=0}^{N-1} \! \norm{\nomu_{\t}-\K\nomx_{\t}}^2 \leq\!\! \sum_{\t=0}^{N-1}\left(\big\|\nomx_{\t}\big\|_\Q^2+\big\|\nomu_{\t}\big\|_\Rcost^2\right)\refeq{\eqref{eq:iss2}}{\leq}  \alpha_0 (\norm{\x_0})
    \end{align} 
    for all $N$.
    If we use~\eqref{eq:iss3} in~\eqref{eq:iss1} and do some basic algebra to estimate the positive definite weighting matrices, then we obtain that there exist constants $\constproof_3,\constproof_4,\constproof_\d>0$ such that
    \begin{align*}
    \sum_{\t=0}^N\norm{\x_{\t}}^2\ \ \refeq{(\ref{eq:iss1},\,\ref{eq:iss3})}{\leq}\  \,\constproof_4 \alpha_0 (\norm{\x_0})+\constproof_3\norm{
        \x_{0}}^2+ \constproof_\d \sum_{\t=0}^{N-1}\norm{ \d_{\t}}^2.
    \end{align*}
    Defining $\alpha(\norm{\x_0})=\constproof_3\norm{
        \x_{0}}^2+\constproof_4 \alpha_0 (\norm{\x_0})$ concludes the proof.
\end{proof}

\begin{remark}
    The bound~\eqref{eq:iss} can be formulated with $\alpha(\x_0)=a_0 \|\x_0\|^2$ if only a compact set of initial conditions is considered, e.g., due to compact constraints.
    In this case, the function $\alpha_0$ in the proof can be constructed as quadratic function from the local quadratic bound and the maximum of $\J_\T$ on this compact set.
    The difficulty in achieving a quadratic bound without considering compact sets stems from the nonlinear constraints.
    Such a quadratic bound is desirable, since it guarantees not only asymptotic but exponential stability in the absence of disturbances.
\end{remark}

\subsection{Extensions, special cases and discussion}
In this subsection, we discuss some special cases of the scheme and further extensions.
\begin{remark}\label{rem:simplifications}
    In special cases, the recursion of the error bound $\s_{0|\t}$ in \eqref{eq:s_cl} can be simplified.
    \begin{itemize}
        \item If the initialization of the nominal predictions is set to follow the nominal dynamic, i.e., $\nomx_{0|\t+1}=\nomx_{1|\t}$, then \eqref{eq:s_cl} simplifies to $\s_{0|\t+1}=\s_{1|\t}$.
        This is the special case that has been addressed in the preliminary conference paper~\cite{Schwenkel2020}.
        \item If the LMI \eqref{eq:lmi} from Ass.~\ref{ass:iqc} can be satisfied with a blockdiagonal $\P$ having $\P_{21}=0$, then \eqref{eq:s_cl} simplifies to 
        \begin{align*}
            \s_{0|\t}=\s_{1|\t-1}+\norm{\e_{0|\t}}_{\P_{11}}^2-\norm{\e_{1|\t-1}}_{\P_{11}}^2
        \end{align*}
        \item If the filter state $\filtx_\t=\filtx_{0|\t}=\filtx_{1|\t-1}$ is known, then the tighter recursion \eqref{eq:c_cl} can be used instead of \eqref{eq:s_cl} to propagate $\s_{1|\t-1}$ to  $\s_{0|\t}$. The filter state can be computed if (i) $\filtBout=0$ or $\filtBout\w_\t$ can be measured; and (ii) $\filtBin=0$ or $\filtBin\y_\t$ can be measured. If the filter is static we can also use \eqref{eq:c_cl}.
    \end{itemize}
\end{remark}
\begin{remark}\label{rem:constr_trafo}
    The increase in the computational complexity is moderate compared to a nominal MPC scheme.
    The scalar error bounding system can be interpreted as an additional state such that the number of decision variables increases as if the state dimension would increase by $1$.
    However, we introduced nonlinear constraints~\eqref{eq:error_bound_mpc} and~\eqref{eq:mpc_constr}, which might render the problem more complicated.
    Nevertheless, we can reformulate the non-differentiable square root in the constraint~\eqref{eq:mpc_constr} as an equivalent differentiable constraint
    \begin{align*}
        \eqref{eq:mpc_constr} &\Leftrightarrow \sqrt{\s_{\k|\t}}\constrTight_i \leq \constrVec_i-\constrMat_i \begin{bmatrix}
            \nomx_{\k|\t} \\ \nomu_{\k|\t}
        \end{bmatrix} \ \forall i=1,...,\dimconstrVec
        \\&\Leftrightarrow \s_{\k|\t}\constrTight_i^2\leq \left(\constrVec_i-\constrMat_i \begin{bmatrix}
            \nomx_{\k|\t} \\ \nomu_{\k|\t}
        \end{bmatrix}\right)^2 \ \wedge\ \constrVec_i\geq \constrMat_i \begin{bmatrix}
            \nomx_{\k|\t} \\ \nomu_{\k|\t}
        \end{bmatrix}\\&\qquad \qquad \qquad\qquad \qquad \qquad\qquad \qquad  \ \forall i=1,...,\dimconstrVec.
    \end{align*}
    An analogous transformation can be applied to the constraint~\eqref{eq:s_cl} as well to get rid of the square root therein.
    If we further combine this with one of the simplifications from Rem.~\ref{rem:simplifications}, where~\eqref{eq:s_cl} need not be included, then the optimization problem becomes a Quadratically Constrained Quadratic Program (QCQP). 
\end{remark}
\begin{remark}
    We note that the proposed MPC scheme can be further simplified by using a fixed tube size $\s_{\text{max}}$ instead of the tube dynamics.
    Then, a constraint on the nominal output $\|\nomy_{\k|\t}\|_\Gamma^2 \leq (1-\rho^2) \s_{\text{max}} - \gamma \dmax^2$ can be used to make sure that the actual $\s_{\k|\t}$ is always less than or equal to $\s_{\text{max}}$.
    This special case of using a constant tube in combination with an output constraint is conceptually similar to~\cite{Falugi2014}, where exactly this procedure is proposed with an $\ell_\infty$-gain bound on $\Delta$ instead of an IQC describing it.
    If we further want to optimize over the initial nominal state, we need to add a constraint on $\nomx_{0|\t}$ that~\eqref{eq:s_cl} is less than or equal to $\s_{\text{max}}$ if we substitute $\s_{1|\t-1}=\s_{\text{max}}$ in~\eqref{eq:s_cl}.
\end{remark}

    \begin{remark}\label{rem:optimize_P}
        The design parameters for the tube dynamics and the constraint tightening are $\rho$, $\P$, $\Gamma$, $\gamma$, $\K$.
        When the pre-stabilizing control law $\K$ and the exponential decay rate $\rho$ are fixed, the parameters $\P$, $\Gamma$ and $\gamma$ can be determined as solutions of the LMI \eqref{eq:finsler_lmi}.
        Ass.~\ref{ass:iqc} guarantees the existence of not only one but infinitely many solutions of the LMI (due to the strict inequality), which we can use to optimize over the matrices $\P$ that define the shape of the tube.
        Finding the matrix $\P$ that minimizes the constraint tightening $\constrTight$ is actually a convex problem since $\constrTight_i^2 \leq \gamma_{i}$ with $\gamma_i>0$ can be reformulated as an LMI by using $\Ptube=\P_{11}-\P_{21}^\top \P_{22}^{-1}\P_{21}$ and applying the Schur complement twice
        \begin{align*}
            \constrTight_i^2 \leq \gamma_{i} &\Leftrightarrow \constrMat_i \begin{bmatrix}
                I& \K
            \end{bmatrix}^\top \Ptube^{-1} \begin{bmatrix}
                I & K
            \end{bmatrix}\constrMat_i^\top \leq \gamma_{i}\\
            &\Leftrightarrow \begin{bmatrix}
                \Ptube & \begin{bmatrix}
                    I & \K 
                \end{bmatrix} \constrMat_i^\top \\ \constrMat_i \begin{bmatrix}
                    I & \K
                \end{bmatrix}^\top & \gamma_{i}
            \end{bmatrix} \succeq 0\\ 
            &\Leftrightarrow \begin{bmatrix}
                \P_{22} & \P_{21} & 0  \\
                \P_{21}^\top & \P_{11} & \begin{bmatrix}
                    I & \K 
                \end{bmatrix} \constrMat_i^\top \\
                0 & \constrMat_i \begin{bmatrix}I & \K\end{bmatrix}^\top & \gamma_{i}
            \end{bmatrix} \succeq 0.
        \end{align*}
        Choosing $\sum_{i=0}^{\dimconstrVec} \gamma_{i}$ as objective yields a semi-definite program whose solution is a matrix $\P$ that minimizes the sum of all constraint tightenings.
        Further, since we can always rescale a solution of the LMI, we need to fix or at least bound $\Gamma$ and $\gamma$ when performing this optimization, otherwise the solutions of $\P$, $\Gamma$ and $\gamma$ tend to infinity.
    \end{remark}

\begin{remark}
    The shape of the tube resulting from the proposed approach is an ellipsoid defined by the shape matrix $\Ptube=\P_{11}-\P_{21}^\top \P_{22}^{-1} \P_{21}$.
    Using a fixed shape for the tube is important to keep the online computational complexity low and is standard in most tube-based MPC schemes (e.g., \cite{Chisci2001}, \cite{Mayne2005}, \cite{Rakovic2012}, \cite{Loehning2014}, etc.).
    Nevertheless, we can reduce conservatism by using several tubes with different shape matrices $\Ptube_i$ and scaling parameters $\s_{\k|\t}^i$ at the same time leading to an intersection of ellipsoids and a vector valued tube scaling parameter $\s_{\k|\t}$.
    Note that the feedback $\K$ must be the same for all tubes.
    Then, we can use Rem.~\ref{rem:optimize_P} but choose only one $\gamma_i$ as objective to obtain the shape matrix $\Ptube^i$ which constitutes the tube to tighten constraint $i$ (and only constraint $i$). 
    By repeating this for all constraints, we obtain $\dimconstrVec$ ellipsoidal tubes and use the intersection of them for the constraint tightening.
    If we do not optimize over initial conditions (compare Rem.~\ref{rem:simplifications}) and use the same $\Gamma$ and $\gamma$, then \eqref{eq:s_cl} is independent of $\Ptube^i$, which implies that all tube scalings $\s_{\k|\t}^i$ are identical, thus we need only one $\s_{\k|\t}$ and in this case do not increase the computational complexity.
\end{remark}

\subsection{Terminal ingredients}\label{sec:term_cond}
The purpose of this section is to give a constructive proof how a local controller $\K_\Omega$, a terminal set $\Omega$ and a terminal cost weight $\S$ can be found that satisfy Ass.~\ref{ass:term_cond}.
\begin{proposition}\label{prop:term_cond}
    Let the matrices $\Q\succ 0$, $\Rcost\succ 0$, $\A$, $\Bu$, $\C$, $\Du$, $\constrMat$, the vectors $\constrVec$, $\constrTight$ and the scalars $\rho \in (0,1), \dmax\geq 0,\gamma>0$ be given. If $(\A,\Bu)$ is stabilizable and $\constrVec> \frac{\sqrt{\gamma}\dmax}{\sqrt{1-\rho^2}} \constrTight$, then there exists $\Kloc$, $\S\succ0$, $\nomx_\Omega>0$, and $\s_\Omega>0$ such that Ass.~\ref{ass:term_cond} holds with $\Omega=\big\{\begin{bmatrix}\nomx^\top & \s\end{bmatrix}^\top\big|\|\nomx\|_\S^2 \leq \nomx_\Omega, 0\leq \s\leq \s_\Omega\big\}$.
\end{proposition}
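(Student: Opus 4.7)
The plan is to construct the terminal ingredients in three stages: first $\Kloc$ and $\S$ via LQR, then $\s_\Omega$ as a function of $\nomx_\Omega$ to secure positive invariance, and finally $\nomx_\Omega$ small enough for the tightened constraints. Since $(\A,\Bu)$ is stabilizable and $\Q,\Rcost\succ 0$, the discrete-time algebraic Riccati equation has a unique stabilizing solution $\S\succ 0$, and I would take $\Kloc:=-(\Rcost+\Bu^\top\S\Bu)^{-1}\Bu^\top\S\A$. By standard LQR algebra, $\A+\Bu\Kloc$ is Schur and condition~3) of Ass.~\ref{ass:term_cond} holds with equality through the Lyapunov identity $(\A+\Bu\Kloc)^\top\S(\A+\Bu\Kloc)-\S=-\Q-\Kloc^\top\Rcost\Kloc$, independently of the subsequent choices of $\nomx_\Omega$ and $\s_\Omega$.

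For positive invariance, I would couple the two radii by setting
\[\beta:=\max_{\norm{\nomx}_\S^2\leq 1}\norm{(\C+\Du\Kloc)\nomx}_\Gamma^2,\qquad \s_\Omega:=\frac{\beta\,\nomx_\Omega+\gamma\dmax^2}{1-\rho^2},\]
where $\beta<\infty$ by compactness. For any $\begin{bmatrix}\nomx^\top & \s\end{bmatrix}^\top\in\Omega$, condition~3) yields $\norm{(\A+\Bu\Kloc)\nomx}_\S^2\leq\norm{\nomx}_\S^2\leq\nomx_\Omega$, and
\[\rho^2\s+\norm{(\C+\Du\Kloc)\nomx}_\Gamma^2+\gamma\dmax^2\leq\rho^2\s_\Omega+\beta\nomx_\Omega+\gamma\dmax^2=\s_\Omega\]
by construction, while the scalar successor is trivially nonnegative. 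This establishes condition~1).

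For the tightened constraints~2), I would shrink $\nomx_\Omega$. Defining $\alpha_i:=\norm{\begin{bmatrix}I & \Kloc^\top\end{bmatrix}\constrMat_i^\top}_{\S^{-1}}$, a Cauchy--Schwarz estimate gives that the supremum of the $i$-th row of the left-hand side of~2) over $\norm{\nomx}_\S^2\leq\nomx_\Omega$ equals $\alpha_i\sqrt{\nomx_\Omega}$, so at $\s=\s_\Omega$ the condition reduces to
\[\alpha_i\sqrt{\nomx_\Omega}+\sqrt{\s_\Omega}\,\constrTight_i\leq\constrVec_i,\qquad i=1,\dots,\dimconstrVec.\]
As $\nomx_\Omega\downarrow 0$, the left-hand side converges to $\tfrac{\sqrt{\gamma}\,\dmax}{\sqrt{1-\rho^2}}\constrTight_i$, which is strictly dominated by $\constrVec_i$ under the hypothesis; continuity in $\nomx_\Omega$ then supplies a common strictly positive value enforcing all $\dimconstrVec$ inequalities simultaneously.

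The main obstacle is the unavoidable coupling between $\nomx_\Omega$ and $\s_\Omega$: the latter is bounded below by $\tfrac{\gamma\dmax^2}{1-\rho^2}$ regardless of how small $\nomx_\Omega$ is, so a disturbance-induced constraint tightening of $\tfrac{\sqrt{\gamma}\,\dmax}{\sqrt{1-\rho^2}}\constrTight_i$ cannot be removed by shrinking the state sublevel set alone. The hypothesis $\constrVec>\tfrac{\sqrt{\gamma}\,\dmax}{\sqrt{1-\rho^2}}\constrTight$ provides exactly the strict slack needed to absorb this residual, and no terminal set of the stated shape could exist without it.
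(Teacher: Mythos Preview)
Your proof is correct and follows essentially the same approach as the paper: pick a stabilizing gain so that the Lyapunov/Riccati identity gives condition~3), define $\s_\Omega$ as $\frac{1}{1-\rho^2}\big(\sup_{\|\nomx\|_\S^2\le\nomx_\Omega}\|(\C+\Du\Kloc)\nomx\|_\Gamma^2+\gamma\dmax^2\big)$ to obtain invariance, and then shrink $\nomx_\Omega$ using the strict slack $\constrVec>\frac{\sqrt{\gamma}\dmax}{\sqrt{1-\rho^2}}\constrTight$ to satisfy the tightened constraints. The only cosmetic difference is that the paper allows an arbitrary stabilizing $\Kloc$ together with the corresponding Lyapunov equation, whereas you commit to the LQR solution; your choice is simply a concrete instance of theirs, and your explicit $\alpha_i$ and continuity argument spell out what the paper summarizes as ``choose $\nomx_\Omega$ small enough.''
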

\begin{proof}
    Since $(\A,\Bu)$ is stabilizable, we can find $\Kloc$ such that $\A+\Bu\Kloc$ is Schur stable. Thus, for each $\Qproof\succ0$ there is $\S\succ0$ such that
    \begin{align*}
    (\A+\Bu\Kloc)^\top \S (\A+\Bu\Kloc) - \S =-\Qproof.
    \end{align*}
    Setting $\Qproof=\Q+\Kloc^\top \Rcost \Kloc$ renders 3) of Ass.~\ref{ass:term_cond} true for all $\nomx$.
    Further, for each $\nomx_\Omega>0$, the set $\Omega_{\nomx}=\{\nomx|\nomx^\top \S \nomx\leq \nomx_\Omega \}$ is a positive invariant set of the nominal dynamics~\eqref{eq:nomx} controlled by $\nomu=\Kloc \nomx$. To choose $\s_\Omega$ such that $\Omega$ is a positive invariant set of the augmented dynamics of $\begin{bmatrix}\nomx^\top &\s \end{bmatrix}^\top$ and hence 1) of Ass.~\ref{ass:term_cond} holds, we set
    \begin{align*}
    \s_\Omega :=\sup_{\nomx \in \Omega_{\nomx}} \frac{1}{1-\rho^2} \big(\|(\C+\Du\Kloc)\nomx\|_\Gamma^2 + \gamma \dmax^2 \big).
    \end{align*}
    Finally, we can choose $\nomx_\Omega>0$ and $\s_\Omega >\frac{\gamma \dmax^2}{1-\rho^2}$ small enough such that $
    \constrMat\begin{bmatrix}
    \nomx \\ \Kloc\nomx
    \end{bmatrix} \leq \constrVec-\sqrt{\s_\Omega}\constrTight$
    holds for all $\|\nomx\|_\S^2\le\nomx_\Omega$, since $\constrVec> \frac{\sqrt{\gamma}\dmax}{\sqrt{1-\rho^2}} \constrTight$. Then 2) of Ass.~\ref{ass:term_cond} holds as well.
\end{proof}
\begin{remark}
    If the requirement $\constrVec> \frac{\sqrt{\gamma}\dmax}{\sqrt{1-\rho^2}} \constrTight$ is not satisfied, then the worst-case disturbance $\dmax$ is too large to meet the constraints, such that no suitable terminal region exists (for this choice of $\P$ and $\K$ in Ass.~\ref{ass:iqc}, other $\P$ and $\K$ could change $\constrTight$.).
    Such a requirement is intuitive as constraint satisfaction cannot be achieved if the disturbances get arbitrarily large.    
\end{remark}

\begin{remark}\label{rem:pseudoAlgorithm}
  We want to briefly summarize the main steps and offline computations necessary to implement the scheme.
  \begin{enumerate}
    \item Find a $\rho$-hard IQC description of the uncertainty $\Delta$.
    \item Find suitable $\K$ such that Ass.~\ref{ass:iqc} holds.
    \item Compute $\P$, $\Gamma$ and $\gamma$ according to Rem.~\ref{rem:optimize_P}.
    \item Compute terminal ingredients according to Prop.~\ref{prop:term_cond}.
  \end{enumerate}
  A systematic synthesis procedure of step 2) is subject of current research. 
  In contrast to step 3), it cannot be expected to result in a semi-definite program (compare \cite{Veenman2014}, where the IQC synthesis is solved iteratively similar to a $D$-$K$-iteration).
\end{remark}

\section{Numerical Example}\label{sec:example}
The following example demonstrates the advantages of using the much more flexible IQC framework to describe dynamic uncertainties compared to the $\ell_\infty$-gain that was used in earlier tube-based MPC schemes \cite{Falugi2014}.
To this end, consider the following system
\begin{align*}
\x_{\t+1} &= \begin{bmatrix} 1.05 & -0.3 \\ 0 & 0.95 \end{bmatrix} \x_\t +\begin{bmatrix}
1 \\ 0
\end{bmatrix} \d_\t + \begin{bmatrix}
0 \\ 1
\end{bmatrix} \u_{\t-\tau_\t}\\& = A\x_\t + \Bd \d_\t + \Bu \u_{\t-\tau_\t} 
\end{align*}
with an unknown, possibly time-varying delay $\tau_\t \in [0, \tau_\mathrm{max}]$, $\tau_\mathrm{max}=2$ on the input signal $u$ and with an external disturbance $\d$ that satisfies $|\d_\t|\leq 0.001$ and acts on the unstable mode.
Note that the control input (even for $\tau_t=0$) has a larger relative degree to the unstable mode than the disturbance and additionally must go through the time delay.
Further, the state constraint $\left[\begin{smallmatrix} -0.4 \\ -0.2 \end{smallmatrix}\right] \leq \x_\t \leq \left[\begin{smallmatrix} 0.4 \\ 0.2 \end{smallmatrix}\right]$ and the input constraint $|\u_\t|\leq 0.1$ must be satisfied at all times.
In order to write the system in the form of \eqref{eq:sys}, we define the nominal case as $\tau = 0$.
Hence, we obtain
\begin{subequations}
	\begin{align}
	\x_{\t+1} &=\A\x_\t+ \Bu \w_\t +\Bd \d_\t +\Bu \u_\t \\
	\y_\t &= \u_\t \\
	\w_\t &= \Delta (\y )_\t = \y_{\t-\tau_\t} - \y_\t. \label{eq:Delta_ex} 
	\end{align}	
\end{subequations}
It is straightforward to see that $\Delta$ is a causal bounded operator with $\ell_\infty$-gain of $2$.
However, based on the only information of the $\ell_\infty$-gain of $\Delta$, the unstable system cannot be robustly stabilized as the $\ell_\infty$-gain bound of $2$ includes the case $\Delta(\y)_\t=-\y_\t$ which cancels all inputs.
Thus, the approach from \cite{Falugi2014} cannot be used for this problem and we need a less conservative description of the uncertainty $\Delta$ as for example via IQCs.
As proposed in this article, we can design a tube-based MPC scheme based on IQCs.
Hence, we first define the filter $\filt= \ssrep{\filtA}{\begin{bmatrix}\filtBin&\filtBout\end{bmatrix}}{\filtC}{\begin{bmatrix}\filtDin&\filtDout\end{bmatrix}}$ with 
\begin{align*}
    \filtA &= \left[\begin{smallmatrix}0 & I & &  \\[-0.15cm] & \ddots & \ddots &  \\[-0.15cm] && \ddots & I \\ &&& 0\end{smallmatrix}\right],& \filtBin &=\left[\begin{smallmatrix}0 \\[-0.15cm] \vdots \\0 \\ I \end{smallmatrix}\right], & \filtBout &= 0 \\
 \filtC&=\left[\begin{smallmatrix}I & -I & &  \\[-0.15cm] & \ddots & \ddots &  \\[-0.15cm] && \ddots & -I \\ &&& I \\ 0 &\dots&\dots& 0\end{smallmatrix}\right],& \filtDin &=\left[\begin{smallmatrix}0 \\[-0.15cm] \vdots \\0 \\ -I\\0\end{smallmatrix}\right], & \filtDout &= \left[\begin{smallmatrix}0 \\[-0.15cm] \vdots \\0 \\ I \end{smallmatrix}\right],
\end{align*}
which results in the filter state $\filtx_{\t} = \begin{bmatrix}
    \y_{\t-\tau_\mathrm{max}}^\top & \dots & \y_{\t-1}^\top 
\end{bmatrix}^\top$ and the output 
\begin{align*}
    \filty_{\t} &= \begin{bmatrix}
        \y_{\t-\tau_\mathrm{max}}^\top - \y_{\t-\tau_\mathrm{max}+1}^\top & \dots& \y_{\t-1}^\top-\y_{\t}^\top & \w_\t^\top
    \end{bmatrix}^\top.
\end{align*}
As next step we show that the delay uncertainty satisfies an IQC described by this filter. Therefore, let $X\in \R^{\dimy\times \dimy}$, $X\succeq 0$ be arbitrary and let us denote the $\tau\times \tau$ all ones (all zeros) matrix by $\mathbbm{1}_\tau$ (by $0_\tau$) and the Kronecker product by~$\otimes$. Further, for $\tau \in [0,\tau_\mathrm{max}]$ let $X_\tau = \diag(0_{\tau_\mathrm{max}-\tau}, \mathbbm{1}_{\tau})\otimes X \in \R^{\tau_\mathrm{max}\dimy \times \tau_\mathrm{max}\dimy}$ and $\M_\tau = \diag(X_\tau, -X)$.
Then, we obtain 
\begin{align*}
0 &=\|\y_{\t-\tau_\t} - \y_\t\|^2_X- \|\w_\t\|^2_X \\ &= \big\|\textstyle\sum_{k=1}^{\tau_\t} (\y_{\t-k} - \y_{\t-k+1})\big\|^2_X - \|\w_\t\|^2_X = \|\filty_\t\|_{M_{\tau_t}}^2.
\end{align*}
Hence, $\Delta$ satisfies the $\rho$-hard IQC defined by $(\filt, M)$ if $M$ satisfies for all $\tau = 0, \dots, \tau_\mathrm{max}$ the LMI $M\succeq M_\tau$ independent of $\rho$.
We choose\footnote{Here, $\rho$ and $\K$ were manually chosen by an LQR design by varying $\rho$ and the LQR weights until Ass.~\ref{ass:iqc} became feasible.} $\rho=0.95$ and $\K=\begin{bmatrix}0.18 & -0.35 \end{bmatrix}$ and observe that the resulting semi-definite program consisting of the LMI constraints~\eqref{eq:finsler_lmi}, $M\succeq M_\tau$, $\Gamma\succ 0$, $\gamma \succ 0$, and $X\succeq 0$, as well as the decision variables $\P$, $\M$, $X$, $\Gamma$, $\gamma$ is solved with the objective described in Rem.~\ref{rem:optimize_P}, which yields
\begin{align*}
\P&\approx \scriptsize \begin{bmatrix}
   5.9 &\!\!  -8.1 &\!\!  -4.1&\!\!  -11.7  \\
  -8.1 &\!\!  15.7 &\!\!   6.0&\!\!  22.2 \\
  -4.2 &\!\!   6.0 &\!\!  40.2&\!\!  -17.0\\
 -11.7 &\!\!  22.2 &\!\! -17.0&\!\!   81.7
\end{bmatrix}, & \M&\approx \scriptsize\begin{bmatrix}
  29.0 &\!\!  14.5 &  0 \\
  14.5 &\!\!  25.4 &  0 \\
  0    &\!\!    0 &\!\!\!\! -20.7 \end{bmatrix}, \\
\gamma &= \Gamma \approx 244, & X &\approx 20.7.
\end{align*}
The cost function~\eqref{eq:mpc_cost} is defined by $Q=I$ and $R=1$ and the prediction horizon $\T=25$.
With the help of Prop.~\ref{prop:term_cond} we find that the terminal ingredients $\Kloc \approx \begin{bmatrix}0.19 & -0.28\end{bmatrix}$, $S\approx\left[\begin{smallmatrix}9.2& -5.6 \\ -5.6&  7.7\end{smallmatrix}\right]$, $\nomx_\Omega\approx 0.0039$, $\s_\Omega=0.1$ satisfy the requirements of Ass.~\ref{ass:term_cond}.
The MPC optimization problems\footnote{To overcome numerical problems with square roots in the constraints, we apply the equivalence transformation discussed in Rem.~\ref{rem:constr_trafo}.} are solved using CasADi~\cite{Andersson2019} with the solver IPOPT. 

The simulation results for two different initial conditions are shown in Fig.~\ref{fig:exmp}.
\begin{figure}
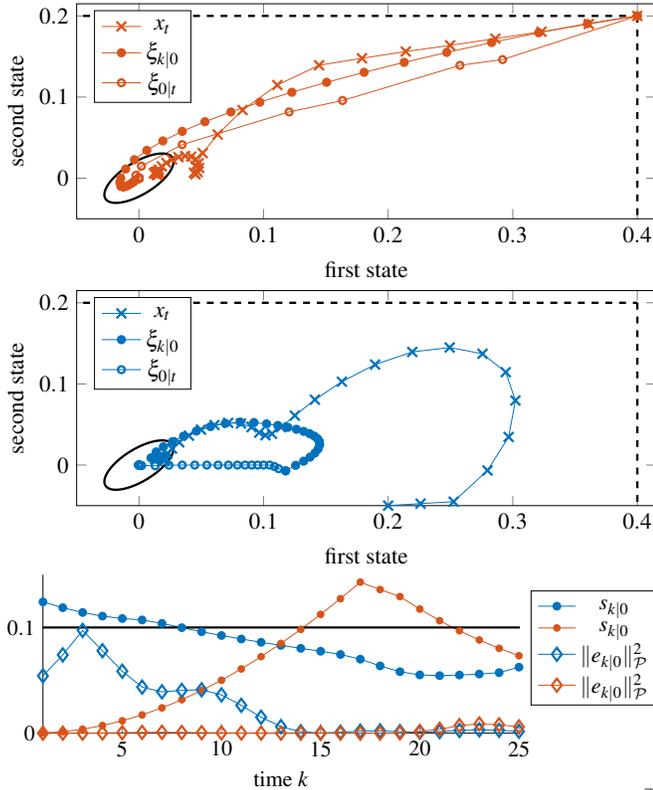

    \centering
    \resizebox{\linewidth}{!}{
        \input{figures/exmp_ss2.tex}}\\
    \resizebox{\linewidth}{!}{
        \input{figures/exmp_ss.tex}}\\
    \resizebox{\linewidth}{!}{
%
%
\definecolor{blau1}{rgb}{0.00000,0.44700,0.74100}%
\definecolor{orange1}{rgb}{0.85000,0.32500,0.09800}%
\begin{tikzpicture}

\begin{axis}[%
width=3in,
height=1in,
at={(0.758in,0.481in)},
scale only axis,
xmin=1,
xmax=25,
ymin=0,
ymax=0.15,
ytick = \empty,
xtick = {0, 5,10, 15,20,25},
axis background/.style={fill=white},
axis x line*=bottom,
axis y line*=left,
xlabel={time $\k$},
ylabel near ticks,
ylabel={},
ytick={0, 0.1},
xlabel near ticks,
legend style = {anchor=north, at={(1.16,0.9)}},
]
\addplot [color=blau1, line width=0.4pt, mark size=1.5pt, mark=*, mark options={solid, blau1, line width=1.0pt}]
  table[row sep=crcr]{%
    0	0.126334475035224\\
    1	0.124211683338295\\
    2	0.118735324080093\\
    3	0.114174145693706\\
    4	0.11075389958458\\
    5	0.108476215690932\\
    6	0.107118817881244\\
    7	0.103867205077468\\
    8	0.0998064898890887\\
    9	0.0959673673580662\\
    10	0.0924021234078202\\
    11	0.0890861895929629\\
    12	0.0859878904273382\\
    13	0.0830697319681144\\
    14	0.0802797158877009\\
    15	0.0775174944057626\\
    16	0.074460164925923\\
    17	0.069998578032789\\
    18	0.0636577609453998\\
    19	0.0581651244429402\\
    20	0.055143211850868\\
    21	0.0544723769422451\\
    22	0.0550242769974622\\
    23	0.0559425058623207\\
    24	0.0572942131518045\\
    25	0.0624602005116584\\
};
\addlegendentry{$\s_{\k|0}$};

\addplot [color=orange1, line width=0.4pt, mark size=1.5pt, mark=*, mark options={solid, orange1}]
  table[row sep=crcr]{%
    0	6.24143424651072e-09\\
    1	0.000320973917760239\\
    2	0.00124139231950626\\
    3	0.00373621856254093\\
    4	0.00724470312468514\\
    5	0.0117263367618731\\
    6	0.0173002104718862\\
    7	0.0239512144768169\\
    8	0.0316566768963943\\
    9	0.040388434202702\\
    10	0.0501143032262291\\
    11	0.0607991727166259\\
    12	0.0724058369528367\\
    13	0.0848956459142019\\
    14	0.0982290201613301\\
    15	0.112365863056475\\
    16	0.127265893295975\\
    17	0.142888913044488\\
    18	0.135872352250887\\
    19	0.12920518768276\\
    20	0.117438040766563\\
    21	0.106439342079242\\
    22	0.0968476086882255\\
    23	0.0881684851358945\\
    24	0.0802677118766688\\
    25	0.0732497833938129\\
};
\addlegendentry{$\s_{\k|0}$};
\addplot [color=blau1, line width=0.4pt, mark size=3pt, mark=diamond, mark options={solid, blau1, line width=1pt}]
table[row sep=crcr]{%
  1	0.0542564194933285\\
  2	0.0740751679995947\\
  3	0.0972233593757962\\
  4	0.0779786807756689\\
  5	0.0586250582593481\\
  6	0.0436019060494408\\
  7	0.0391060047906156\\
  8	0.0404776044017753\\
  9	0.0410247588693748\\
  10	0.036227703795129\\
  11	0.026352936207738\\
  12	0.0151111692698647\\
  13	0.00619451016305353\\
  14	0.00138260622863075\\
  15	0.000200324887746817\\
  16	0.000943392405809566\\
  17	0.00191641880056439\\
  18	0.00211533256689592\\
  19	0.00146185907684446\\
  20	0.00099816885387474\\
  21	0.00151043537675809\\
  22	0.00254031778413223\\
  23	0.0031099147957659\\
  24	0.0027269743355951\\
  25	0.00174247369225442\\
};
\addlegendentry{$\|\e_{k|0}\|_\Ptube^2$};
\addplot [color=orange1, line width=0.4pt, mark size=3pt, mark=diamond, mark options={solid, orange1, line width=1pt}]
table[row sep=crcr]{%
  1	1.18137603427911e-08\\
  2	1.70706761686372e-05\\
  3	1.72373405124673e-05\\
  4	0.000185619079875967\\
  5	0.000476817210990812\\
  6	0.000718292838913378\\
  7	0.000781222842612309\\
  8	0.00068263693227914\\
  9	0.000486365668972163\\
  10	0.000285641836221315\\
  11	0.000135739216657371\\
  12	5.03586179149769e-05\\
  13	1.31949431121696e-05\\
  14	1.48492437724914e-06\\
  15	5.64486418804245e-07\\
  16	6.50545787413325e-06\\
  17	2.06168783843664e-05\\
  18	4.29596483585154e-05\\
  19	0.000272700093879326\\
  20	0.0016181316073794\\
  21	0.00368379562888256\\
  22	0.00720251877685902\\
  23	0.00876004234342835\\
  24	0.00794801728958027\\
  25	0.00594998917709846\\
};
\addlegendentry{$\|\e_{k|0}\|_\Ptube^2$};
\addplot [color=black, line width=1.0pt, forget plot]
table[row sep=crcr]{%
1	0.1\\
31	0.1\\
};
\end{axis}
\end{tikzpicture}
    \caption{Simulation results for initial conditions $\x_{0}=[0.4\ \ 0.2]^\top$ (orange) and $\x_{0}=[0.2\ \ -0.05]^\top$ (blue). \emph{Top and middle:} state space plots with the state constraints (dashed line) and the terminal region (solid ellipsoid). \emph{Bottom:} tube size at $t=0$, corresponding error, and terminal constraint $\s_{\T|0}\leq \gamma_2$ (solid line).}\label{fig:exmp}
\end{figure}
One of the initial conditions is on the boundary of the constraints and the other one starts close to the eigenspace of the unstable eigenvalue. 
We observe for the first initial condition (top, orange) that the tube size starts very small as the controller must be more cautious when close to the constraints.
When moving away from the constraints, the MPC controller has more freedom and can excite the uncertainty stronger, resulting in a growing tube size until the end of the prediction horizon is approached and the terminal constraint of the tube size $\s$ must be satisfied.
For this initial condition, we can see that the error bound $\|\e_{\k|0}\|_P^2\leq \s_{\k|0}$ is conservative and that the tube grows much faster than the actual error, which indicates some conservatism in the variables $\gamma$ and $\Gamma$ or in the IQC description itself.
In closed loop, we observe that the MPC scheme places the nominal state $\nomx_{0|\t}$ after a few steps directly into the origin.
This behavior can be explained by the fact that the MPC controller is designed to interfere only when necessary. After these few states, the MPC does not need to intervene since the system state is far enough from the constraints and close enough to the origin, such that robust constraint satisfaction is guaranteed when solely applying the pre-stabilizing controller $K$.

For the second initial condition (middle, blue), the controller must be more aggressive in the beginning to push the state from the unstable eigenspace towards the stable one. 
This results in a large tube size in the beginning, which shrinks as the systems state gets closer to the constraints, but grows again at the end of the prediction horizon when larger inputs are needed in order to steer the nominal trajectory into the terminal region.
As we can see, the evolution of the tube size over the prediction horizon is flipped compared to the first initial condition.
Further, we can see that the error bound $\|\e_{\k|0}\|_P^2\leq \s_{\k|0}$ is much tighter for this initial condition, especially at time $\k=2$ the difference between the tube and the actual error is $\approx 15\%$.

Note that when fixing the nominal initial condition $\nomx_{0|\t}$ as in \cite{Schwenkel2020}, the MPC is not initially feasible for the second initial condition $\x_0$ and not even for $0.6\x_0$, which demonstrates the performance increase gained from optimizing $\nomx_{0|\t}$.
Moreover, this example shows how the proposed MPC scheme adjusts the tube size flexibly to different scenarios by optimizing the tube size online and guarantees constraint satisfaction despite the dynamic uncertainty. 
Finally, this example demonstrates that the IQC approach in combination with the scalar tube dynamics reduces conservatism in the sense that it can robustly stabilize a system that cannot be stabilized by describing the uncertainty with an $\ell_\infty$-gain bound and using a static tube in combination with an output constraint as in~\cite{Falugi2014}.

\begin{remark}
    Note that a nominal MPC scheme does not stabilize this example.
    In the neighborhood of the origin where no constraints are active the nominal MPC with the standard LQR terminal cost reduces to an LQR controller.
    However, an LQR with $Q=I$ and $R=1$ for the nominal system does not stabilize the true system.
    Hence, when facing dynamic uncertainties, the robust MPC design is not only needed to handle constraints but also for stability.
    This is in contrast to the case of additive bounded disturbances, where a nominal MPC scheme is already input-to-state stable and a tube-based MPC is only needed to ensure robust constraint satisfaction.
\end{remark}
\section{Conclusion}
We have proposed a tube-based MPC scheme for linear systems subject to dynamic uncertainties and disturbances.
The use of $\rho$-hard IQCs to capture the behavior of the dynamic uncertainty offers a more detailed description than in previous MPC schemes based on $\ell_\infty$-gain bounds.
By extending the $\rho$-hard IQC theory, we were able to derive a dynamic bound on the error between the nominal state and the true system state.
When incorporating this scalar error bounding system to predict the tube size in the MPC scheme, we can ensure recursive feasibility and input-to-state stability.
Finally, we have demonstrated in a numerical example that the proposed scheme can reduce conservatism and is applicable to a larger class of systems compared to existing MPC schemes for dynamic uncertainties. 
An open issue regards the extension to dynamic output feedback and a more detailed investigation of the corresponding offline IQC-based feedback synthesis.

\appendix
\section{Appendix}
\subsection{Proof of Theorem~\ref{thm:pn-hard}}\label{sec:proof_thm_2}
\begin{proof}
    Let use introduce some notation: We conveniently write $\Delta\in \mathrm{IQC} (\rho,\Pi)$ and $\Delta\in\mathrm{hardIQC}(\rho,\filt,\M)$ as short for $\Delta$ satisfies the $\rho$-IQC defined by $\Pi$ and the $\rho$-hard IQC defined by $(\filt,\M)$, respectively. 
    Similarly, we denote the set of matrices $\P=\P^\top$ that satisfy~\eqref{eq:lmi} with $\mathrm{LMI}(\rho,\Psi,\M,\G)$. 
    Further, the operators $\rho_+$ and $\rho_-$ are defined via $(\rho_\pm \circ y)_k = \rho^{\pm k} y_k$ as in~\cite[Definition 3]{Boczar2015}.
        
    In the first part of the proof, we will show that $\rho$-IQCs imply $\rho$-hard IQCs. Note that $\Pi$ is a $\rho$-PN multiplier iff $\Pi_\rho$ is a strict PN multiplier in the sense of~\cite[Definition~4]{Hu2016a}. 
    Hence, we can apply~\cite[Lemma~1 and~6]{Hu2016a} to $\Pi_\rho$ and obtain that there exists a (J-spectral) factorization $(\hat \filt,\hat \M)$, with $\hat \M= \diag(I_\dimy,-I_\dimw)$, $\Pi_\rho = \hat \filt^\sim \hat \M \hat \filt$ and $\hat\filt\in\RHinf$ that has the following properties: 
    (i) $\Delta'$ satisfies the $1$-hard IQC defined by $(\hat \filt,\hat \M)$ for all $\Delta'$ that satisfy the $1$-IQC defined by $\Pi_\rho$, 
    (ii) for any $Y\in\RHinf$: if $\hat \P \in \mathrm{LMI}(1,\hat\filt,\hat\M,Y)$ then $\hat \P\succeq 0$. 
    Defining $\filt = \hat \filt_{\rho^{-1}}$ and $\M=\hat \M$, we see that $\filt_\rho^\sim \M\filt_\rho=\hat \filt^\sim \hat \M \hat \filt=\Pi_\rho$ and $\filt_\rho=\hat\filt \in\RHinf$, i.e., $(\filt,\M)$ is a $\rho$-factorization of $\Pi$.
    Further, we define $\Delta' = \rho_-\circ(\Delta \circ \rho_+)$ and obtain by using \cite[Prop.~7]{Boczar2015} that $\Delta\in \mathrm{IQC}(\rho,\Pi)\Rightarrow \Delta'\in \mathrm{IQC}(1,\Pi_\rho)$.
    Now we can use (i) to conclude $\Delta\in \mathrm{IQC}(\rho,\Pi)\Rightarrow\Delta'\in \mathrm{hardIQC}(1,\hat \filt,\hat \M)$.
    If we take a detailed look at this hard IQC, which holds for all $\y\in\ltwoe{\dimy}$ and thus as well for all $\y':=\rho_-\circ \y$, we observe in two steps: first,
    \begin{align*}
        \filty' = \hat \filt \begin{bmatrix}\y'\\\Delta'(\y')\end{bmatrix}= \hat \filt \circ \rho_- \begin{bmatrix}\y\\\Delta(\y)\end{bmatrix}= \rho_- \circ \filt \begin{bmatrix}\y\\\Delta(\y)\end{bmatrix}
    \end{align*}
    and second, for $p:=\filt \left[\begin{smallmatrix}\y\\\Delta(\y)\end{smallmatrix}\right]=\rho_+ \circ p'$
    \begin{align*}        
        \textstyle\sum_{\t=0}^{\T-1} \rho^{-2\t}\filty_\t^\top \M\filty_\t=\sum_{\t=0}^{\T-1} {\filty'_\t}^{\top} \hat \M\filty_\t'\geq 0.
    \end{align*}
    Thus, we have just shown $\Delta '\in \mathrm{hardIQC}(1,\hat \filt,\hat \M)\Rightarrow\Delta\in \mathrm{hardIQC}(\rho,\filt,\M)$ and altogether $\Delta\in \mathrm{IQC}(\rho,\Pi)\Rightarrow\Delta\in \mathrm{hardIQC}(\rho,\filt,\M)$.
    
    In the second part of the proof, we will show that~\eqref{eq:fdi} implies the existence of $\P\succ 0$ such that~\eqref{eq:lmi} holds.
    Due to~\cite[Corollary 12]{Boczar2015},~\eqref{eq:fdi} is equivalent to existence of $\P=\P^\top$ with $P\in \mathrm{LMI}(\rho, \filt, \M, \G)$. 
    This leads to $\rho^2\P\in \mathrm{LMI}(1, \hat \filt,\hat\M, \G_\rho)$ since $\hat \filt=\filt_\rho = \ssrep{\rho^{-1} \filtA}{\rho^{-1} \filtBin \ \ \rho^{-1} \filtBout}{\filtC}{\filtDin\ \ \filtDout}$, $\hat M=\M$, and $\G_\rho = \ssrep{\rho^{-1}\A_\K}{\rho^{-1}\Bw\ \ \rho^{-1}\Bd}{\C_\K}{\Dw\ \ \Dd}$.
    Since $\G_\rho\in\RHinf$ we can conclude with (ii) that $\rho^2\P\succeq 0$.
    Since the LMI holds strict, we can perturb $\P$ slightly to obtain $\P\succ 0$.
\end{proof}

\bibliographystyle{ieeetran}
\bibliography{bib_file}

\end{document}